\theoremstyle{plain}
\newtheorem{thm}{Theorem}[section]
\newtheorem{thmast}[thm]{Theorem*}
\newtheorem{cor}[thm]{Corollary}
\newtheorem{lem}[thm]{Lemma}
\newtheorem{rem}[thm]{Remark}
\newtheorem{conj}[thm]{Conjecture}
\def\cal{\mathcal}
\def\bbb{\mathbb}
\def\op{\operatorname}
\renewcommand{\phi}{\varphi}
\newcommand{\N}{\bbb{N}}
\newcommand{\Z}{\bbb{Z}}
\newcommand{\Q}{\bbb{Q}}
\newcommand{\C}{\bbb{C}}
\begin{document}
\author{Andrew Bremner
\and Maciej Ulas}
\thanks{The research of the second author is partially supported by the grant of the Polish National Science Centre no. UMO-2012/07/E/ST1/00185}

\subjclass[2010]{{13P05}, {11C08}, {11G30}}
\keywords{Quadrinomial, factorization, reducibility, curves of genus 2}

\title[Observations concerning reducibility of quadrinomials]{Some observations concerning reducibility of quadrinomials}

\begin{abstract}
In a recent paper \cite{Jan}, Jankauskas proved some interesting
results concerning the reducibility of quadrinomials of the form
$f(4,x)$, where $f(a,x)=x^{n}+x^{m}+x^{k}+a$. He also obtained some
examples of reducible quadrinomials $f(a,x)$ with $a\in\Z$, such
that all the irreducible factors of $f(a,x)$ are of degree $\geq 3$.

In this paper we perform a more systematic approach to the problem and
ask about reducibility of $f(a,x)$ with $a\in\Q$. In particular by
computing the set of rational points on some genus two curves we
characterize in several cases all quadrinomials $f(a,x)$ with degree $\leq 6$ and
divisible by a quadratic polynomial. We also give further examples of
reducible $f(a,x)$, $a\in\Q$, such that all irreducible
factors are of degree $\geq 3$.

\bigskip



\end{abstract}

\maketitle
\section{Introduction}\label{section1}

Let $f(x)$ be a polynomial with rational coefficients. We say that
the polynomial $f$ is {\it primitive} if it is not of the form
$g(x^{l})$ for some $l\geq 2$. Throughout the paper, reducibility
of a polynomial will mean reducibility over $\Q$.

At the West Coast Number Theory conference in 2007, P.G.~Walsh
posed the following question. Let $n>m>k$ be positive integers. Does
there exist an irreducible polynomial $f(x)=x^n+x^m+x^k+4$, $\op{deg}f>17$,
such that for some integer $l>1$, the
polynomial $f(x^l)$ is reducible in $\Z[x]$?
The answer to this question was given by J.~Jankauskas in \cite{Jan}.
He proved that the only primitive quadrinomial of the form
$f(x)=x^n+x^m+x^k+4$, such that $f(x^l)$ is reducible for some
$l>1$, is the polynomial $f(x)=x^4+x^3+x^2+4$. In this case, $l=2g$, where $g\in\N_{+}$.
In particular, when $l=2$,
\begin{equation*}
f(x^{2})=(x^{4}-x^{3}+x^{2}-2x+2)(x^{4}+x^{3}+x^{2}+2x+2).
\end{equation*}

Let $a\in\Q$ and define the quadrinomial
\begin{equation*}
f_{n,m,k}(a,x)=x^{n}+x^{m}+x^{k}+a.
\end{equation*}
Walsh also asked for examples of reducible primitive $f_{n,m,k}(a,x)$ with integer
constant coeficient $a>4$,
but which have no linear or quadratic factor. He gave one such
example: $x^7+x^5+x^3+8=(x^3-x^2-x+2)(x^4+x^3+3x^2+2x+4)$. Jankauskas
in the cited paper also considered this problem using a computational
approach but did not find any additional examples. However, he
did find four examples of reducible $f_{n,m,k}(a,x)$ with
$a<-5$ such that all irreducible factors are of degree $\geq 3$.

In this paper we study Walsh's question in a more systematic way.
More precisely, we are interested in the reducibility of
quadrinomials $f_{n,m,k}(a,x)$ with $a\in\Q$. However, we drop the condition
that all irreducible factors be of degree $\geq 3$
because it is too restrictive. It is clear that if $f_{n,m,k}(a,x)$ has a
linear factor over $\Q$, say $x-p $, where $p\in \Q$, then
$a=-p^{n}-p^{m}-p^{k}$. The question about reducibility of $f_{n,m,k}(a,x)$ with $a\in\Q$,
starts to be interesting if we ask about irreducible factors of degree
$\geq 2$. In fact most of our results are related to the characterization of those
$a\in\Q$, and $n,m,k\in\N$ with $n>m>k$ and ``small" $n$, such that
$f_{n,m,k}(a,x)$ is divisible by a quadratic polynomial. Although this case
is far from general, it is highly nontrivial. Indeed, as we shall see for
fixed $n,m,k\in\N$ with $n>5$, the problem reduces to finding all rational
points on a curve of genus $\geq 2$.

\begin{rem}
{\rm
We are interested in this paper in the reducibility of $f_{n,m,k}(a,x)$, but
it is worth pointing out that it is easy to show the existence of infinitely
many irreducible such for a fixed degree $n \geq 3$, as follows.

\begin{lem}
Let $p \geq 5$ be prime. Then the quadrinomial $x^n+x^m+x^k+p$, $n>m>k\geq 1$,
is irreducible over $\Q$.
\end{lem}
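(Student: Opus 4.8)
The plan is to combine Gauss's lemma with a bound on the complex roots of $f$. Since $f(x)=x^{n}+x^{m}+x^{k}+p$ is monic with integer coefficients, reducibility over $\Q$ is equivalent to reducibility over $\Z$, so I would suppose for contradiction that $f=gh$ with $g,h\in\Z[x]$ both monic of degree $\geq 1$. The prime constant term is what drives everything: evaluating at $0$ gives $g(0)h(0)=f(0)=p$, and since $p$ is prime, one of the integers $g(0),h(0)$ must equal $\pm 1$.

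The heart of the proof is to show that every complex root $\alpha$ of $f$ satisfies $|\alpha|>1$. Indeed, if $|\alpha|\leq 1$ then $|\alpha^{n}+\alpha^{m}+\alpha^{k}|\leq 3$, whereas the relation $f(\alpha)=0$ forces $\alpha^{n}+\alpha^{m}+\alpha^{k}=-p$ and hence $|\alpha^{n}+\alpha^{m}+\alpha^{k}|=p\geq 5$; as $5>3$ this is impossible. This is exactly the step where the hypothesis $p\geq 5$, or more precisely $p>3$, is used, and I expect it to be the only nonroutine point in the argument.

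With the root bound in hand the conclusion is immediate. Each of $g$ and $h$ is monic of positive degree, so up to sign its constant term equals the product of its roots, each of which is a root of $f$ and therefore has modulus exceeding $1$; concretely, if $g(x)=\prod_{i}(x-\beta_{i})$ then $|g(0)|=\prod_{i}|\beta_{i}|>1$, and similarly $|h(0)|>1$. This contradicts the fact that one of $g(0),h(0)$ equals $\pm 1$. Hence no nontrivial factorization exists and $f$ is irreducible.

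The one genuinely essential feature is the interplay between the two hypotheses on $p$: primality is needed so that a factorization of the constant term into integer parts forces a unit factor $\pm 1$, while the bound $p>3$ is needed so that no root can lie in the closed unit disk. Everything beyond the root estimate is a direct application of Gauss's lemma and the multiplicativity of constant terms. Composite values such as $p=4$ are excluded precisely because then $g(0)=h(0)=\pm 2$ becomes possible, and the small primes $p=2,3$ are excluded because roots on the unit circle could then occur.
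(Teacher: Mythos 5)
Your proof is correct and follows essentially the same route as the paper's: both arguments reduce to the observation that a factor with constant term $\pm 1$ forces a root $z$ of $f$ with $|z|\leq 1$ (you phrase it contrapositively, showing all roots have modulus $>1$), and both derive the contradiction $p=|z^{n}+z^{m}+z^{k}|\leq 3<5$. The only difference is organizational, so nothing further is needed.
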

\begin{proof}
Suppose $x^n+x^m+x^k+p=f_1(x)f_2(x)$ in $\Z[x]$, with $n>\deg(f_1),\deg(f_2) \geq 1$.
The constant coefficient of $f_1$ is without loss of generality $\pm p$, so
that the constant coefficient of $f_2$ is $\pm 1$. Not all roots of $f_2$ can therefore
have absolute value greater than $1$, so take $z \in \C$ as root of $f_2$ satisfying
$|z| \leq 1$. Then $p=|z^n+z^m+z^k| \leq |z|^n+|z|^m+|z|^k \leq 3$.
\end{proof}
More on (ir)reducibility of general quadrinomials can be found in \cite{FrSch}.
}
\end{rem}

\section{Reducible quadrinomials $f_{n,m,k}(a,x)$ with $\op{deg}f=4$}\label{section2}

In this section we characterize all $a\in\Q\setminus\{0\}$ and pairs of integers
$m,k\in\N$ satisfying $4>m>k$ such that $f_{4,m,k}(a,x)$ (of degree 4) is divisible
by a quadratic polynomial.

\begin{thm}\label{thm:n=4}
Let $f_{4,m,k}(a,x)=x^4+x^m+x^k+a$, where $4>m>k \geq 1$ and $a\in\Q$. Then
$f_{4,m,k}(a,x)$ has a quadratic factor precisely in the following cases:
\begin{enumerate}
\item If $(m,k)=(2,1)$ then $f_{4,2,1}(a,x)$ is divisible by $x^2+px+q$ if
and only if
\begin{equation*}
a=\frac{(p^3+p-1)(p^3+p+1)}{4p^2},\quad q=\frac{p^3+p-1}{2p}.
\end{equation*}
In this case we have
\begin{equation*}
f_{4,2,1}(a,x)=\left(x^2-px+\frac{p^3+p+1}{2p}\right)\left(x^2+px+\frac{p^3+p-1}{2p}\right).
\end{equation*}
\item If $(m,k)=(3,1)$ then $f_{4,3,1}(a,x)$ is divisible by $x^2+px+q$ if
and only if
\begin{equation*}
a=\frac{(p^3-2p^2+p+1)(p^3-p^2-1)}{(2p-1)^2},\quad q=\frac{p^3-p^2-1}{2p-1}.
\end{equation*}
In this case we have
\begin{equation*}
f_{4,3,1}(a,x)=\left(x^2-(p-1)x+\frac{p^3-2p^2+p+1}{2p-1}\right)\left(x^2+px+\frac{p^3-p^2-1}{2p-1}\right)
\end{equation*}

\item If $(m,k)=(3,2)$, then $f_{4,3,2}(a,x)$ is divisible by $x^2+px+q$ if
and only if
\begin{equation*}
a=\frac{(p-1)p(p^2-p+1)^2}{(2p-1)^2},\quad q=\frac{p(p^2-p+1)}{2p-1}.
\end{equation*}
In this case we have
\begin{equation*}
f_{4,3,2}(a,x)=\left(x^2-(p-1)x+\frac{(p-1)(p^2-p+1)}{2p-1}\right)\left(x^2+px+\frac{p(p^2-p+1)}{2p-1}\right)
\end{equation*}
\end{enumerate}
\end{thm}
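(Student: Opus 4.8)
The plan is to reduce the divisibility condition to a system of polynomial equations by performing the division explicitly. Since $f_{4,m,k}(a,x)$ is a monic quartic, if it is divisible by a monic quadratic $x^2+px+q$, then the cofactor is also a monic quadratic, which I write as $x^2+rx+s$. Thus I would start from the identity
\begin{equation*}
x^4+x^m+x^k+a=(x^2+px+q)(x^2+rx+s).
\end{equation*}
Expanding the right-hand side and comparing coefficients of $x^3,x^2,x^1,x^0$ yields four equations in the four unknowns $p,q,r,s$ (with $a$ as a free parameter, or equivalently treating $a,q,r,s$ as unknowns to be solved in terms of $p$). The coefficient of $x^3$ always gives $r=-p$ immediately, since $f$ has no $x^3$ term; this is the crucial simplification that makes the whole system tractable and explains why each factorization in the statement pairs a factor $x^2+px+\cdots$ with a factor $x^2-px+\cdots$.

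Having set $r=-p$, I would substitute and treat the three remaining equations (from the $x^2$, $x^1$, and $x^0$ coefficients) case by case according to which of $m,k$ coincide with the exponents $2,1$ appearing in the expansion. For instance, in case $(m,k)=(2,1)$ the middle coefficients receive contributions from the $x^2$ and $x^1$ terms of $f$, whereas in case $(m,k)=(3,1)$ the $x^3$-coefficient equation becomes $r=-p+1$ rather than $r=-p$ (because of the $x^3$ term now present in $f$), which is why that case produces factors $x^2-(p-1)x+\cdots$; I would handle each of the three cases with the appropriate bookkeeping. In every case the $x^2$ and $x^1$ equations form a linear system in $q$ and $s$ once $p$ is fixed, so I can solve for $q$ and $s$ as rational functions of $p$ by elementary linear algebra (the denominators $2p$, $2p-1$, $2p-1$ appearing in the statement are exactly the determinants of these linear systems).

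Finally, the constant-coefficient equation $a=qs$ determines $a$ as a rational function of $p$, and I would verify that substituting the computed values of $q$ and $s$ reproduces precisely the formulas for $a$ given in the theorem. The logical structure is an \emph{if and only if}: the forward direction is the derivation just sketched, showing that any quadratic factor forces $q$ and $a$ into the stated forms; the reverse direction is the immediate observation that the displayed factorizations are genuine identities, which one checks by direct expansion. I do not expect a serious obstacle here, as the degree is low enough that the system is linear in the key unknowns after the $x^3$-coefficient relation is imposed; the only mild subtlety is the case-by-case treatment and keeping track of the denominators $2p$ and $2p-1$, together with noting the excluded degenerate values ($p=0$ or $p=\tfrac12$) where those denominators vanish. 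The routine but slightly tedious part is simply carrying out the three expansions and confirming the algebra matches the stated rational functions.
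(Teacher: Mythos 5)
Your proposal is correct and is essentially the paper's own argument: the authors compute $f_{4,m,k}(a,x)\bmod(x^2+px+q)$ and set the linear remainder to zero, which produces exactly the same triangular/linear system in $q$ and $a$ that you obtain by introducing the cofactor $x^2+rx+s$ and comparing coefficients (the two routes are trivially interconvertible, and your computed values of $q$, $s$, $a$ match the stated formulas in all three cases). Your closing remark about the degenerate denominators $p=0$ and $p=\frac{1}{2}$ is a detail the paper's sketch does not mention, but it does not change the substance of the proof.
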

\begin{proof}
The proof of the theorem is very easy and we only give a sketch of the reasoning.
We compute for fixed $(m,k)\in\{(2,1), (3,1), (3,2)\}$ the polynomial
$f_{4,m,k}(a,x)\bmod{x^2+px+q}$. This polynomial is of the form
$r(x)=A_{m,k}x+B_{m,k},$ where $A_{m,k},\; B_{m,k}\in\Z[a,p,q]$. So
if $r(x)\equiv 0$, then $f_{4,m,k}(a,x)$ is divisible by $x^2+px+q$.
The system of equations $A_{m,k}=0,\; B_{m,k}=0$ is
triangular with respect to $a,\;q$, so it can be easily solved. For
example if $(m,k)=(2,1)$, then
\begin{equation*}
A_{2,1}=-p^3-p+1+2pq,\quad B_{2,1}=a-(p^2+1)q+q^2.
\end{equation*}
and we get the expressions for $a,\;q$ displayed in the Theorem. The remaining cases
follow analogously.
\end{proof}

\begin{rem}\label{rem1}
{\rm  Recall that a polynomial is said to have reducibility type
$(n_1,n_2,...,n_k)$ if there exists a factorization of the
polynomial into irreducible polynomials of degrees $n_1$,
$n_2$,...,$n_k$. Types are ordered so that $n_1 \leq n_2 \leq ...
\leq n_k$. In a recent paper \cite{BreUlas} we study the type of
reducibility of trinomials and get several new results which
complement the results obtained by Schinzel in the series of papers
\cite{Sch1, Sch2, Sch3} (many corrections and some additional
material can be found \cite{Sch4}).  The results
contained in Theorem \ref{thm:n=4} can be used to
characterize those $a\in\Q$ such that the quadrinomial $f_{4,m,k}(a,x)$
has reducibility type $(1,1,2)$. For example,
if $(m,k)=(2,1)$ then the set of those $a\in\Q$ such that $f_{4,2,1}(a,x)$
has reducibility type $(1,1,2)$ is parameterized by the
rational points of the genus $1$ curve
\begin{equation*}
C_{2,1}:\;r^2=-p(p^3+2p+2),
\end{equation*}
which is birationally equivalent to the elliptic curve $E_{2,1}:\; y^2=x^3+x^2-x-5$ of rank 0
and trivial torsion.
Similarly, if $(m,k)=(3,1)$ we are led to the curve
\begin{equation*}
C_{3,1}:\;r^2=-(p+1)(2p-1)(2p^2-5p+5)
\end{equation*}
birationally equivalent to the elliptic curve $E_{3,1}:\; y^2+xy=x^3-x^2-5$ of
rank 0 and $\op{Tors}(E_{3,1}(\Q))=\{\cal{O},(2,-1)\}.$ The point $(2,-1)$ leads to the
 quadrinomial
\begin{equation*}
x^4+x^3+x+1=(x+1)^2(x^2-x+1).
\end{equation*}
Finally, if $(m,k)=(3,2)$ then we get the curve
\begin{equation*}
C_{3,2}:\;r^2=-(p-1)(2p-1)(2p^2-p+3),
\end{equation*}
birationally equivalent to the elliptic curve $E_{3,2}:\; y^2+xy+y=x^3-x-1$ of rank 0
and $\op{Tors}(E_{3,2}(\Q))=\{\cal{O},(1,-1)\}.$ However, the point $(1,-1)$ leads to $a=0$.

Thus if $f_{4,m,k}(a,x)$ has reducibility type $(1,1,2)$, then $(m,k)=(3,1)$ and $a=1$. }
\end{rem}

\section{Reducible quadrinomials $f_{n,m,k}(a,x)$ with $\op{deg}f=5$}\label{section3}

In this section we characterize all $a\in\Q\setminus\{0\}$ and pairs of integers
$m,k\in\N$ satisfying $5>m>k$ such that the quadrinomial $f_{5,m,k}(a,x)$
is divisible by a quadratic polynomial. However, before
stating the results we use a lemma that allows computation of the
polynomials $f_{5,m,k}(a,x)\bmod{x^2+px+q}$. More precisely we prove the
following:

\begin{lem}\label{lem1}
Let $n\in\N$. Then we have
\begin{equation*}
x^{n}\bmod{(x^2+px+q)}=A_{n}(p,q)x+B_{n}(p,q),
\end{equation*}
where $A_{0}(p,q)=0$, $A_{1}(p,q)=1$, $B_{0}(p,q)=1$, $B_{1}(p,q)=0$, and where
for $n\geq 2$:
\begin{equation*}
A_{n}(p,q)=-pA_{n-1}(p,q)-qA_{n-2}(p,q),\quad
B_{n}(p,q)=-pB_{n-1}(p,q)-qB_{n-2}(p,q).
\end{equation*}
\end{lem}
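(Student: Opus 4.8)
The plan is to work in the quotient ring $\Q(p,q)[x]/(x^2+px+q)$, in which every element has a unique representative of degree at most one by the division algorithm for the monic quadratic $x^2+px+q$. Consequently, for each $n\in\N$ there are unique polynomials $A_n(p,q),B_n(p,q)$ with $x^n\equiv A_n x+B_n\pmod{x^2+px+q}$, so that the content of the lemma is precisely to pin down these coefficients via the stated initial data and recurrence. I would establish this by induction on $n$.

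The base cases are immediate: $x^0=1$ gives $(A_0,B_0)=(0,1)$ and $x^1=x$ gives $(A_1,B_1)=(1,0)$, matching the prescribed values. For the inductive step the key observation is that one should factor $x^n=x^2\cdot x^{n-2}$ and apply the defining congruence $x^2\equiv -px-q$ at once, rather than multiplying the previous power by $x$ (which would raise the degree and demand a further reduction). Explicitly,
\begin{equation*}
x^n=x^2\cdot x^{n-2}\equiv(-px-q)x^{n-2}=-p\,x^{n-1}-q\,x^{n-2}\pmod{x^2+px+q}.
\end{equation*}
Substituting the inductive hypotheses $x^{n-1}\equiv A_{n-1}x+B_{n-1}$ and $x^{n-2}\equiv A_{n-2}x+B_{n-2}$ and collecting the coefficients of $x$ and of $1$ yields
\begin{equation*}
x^n\equiv(-pA_{n-1}-qA_{n-2})x+(-pB_{n-1}-qB_{n-2})\pmod{x^2+px+q}.
\end{equation*}

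By the uniqueness of the degree-at-most-one representative, comparing coefficients forces $A_n=-pA_{n-1}-qA_{n-2}$ and $B_n=-pB_{n-1}-qB_{n-2}$, which are exactly the asserted recurrences; both sequences obey the same second-order relation and differ only in their initial values. I do not expect a genuine obstacle: the entire argument rests on the uniqueness of the quadratic remainder (so that matching coefficients is legitimate) together with the bookkeeping choice of writing $x^n=x^2\cdot x^{n-2}$, which is what delivers the clean two-term recurrence directly instead of a coupled first-order system for the pair $(A_n,B_n)$.
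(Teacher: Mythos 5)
Your proof is correct, and it differs from the paper's in the decomposition used for the inductive step. You write $x^{n}=x^{2}\cdot x^{n-2}$ and reduce $x^{2}\equiv -px-q$ immediately, which hands you both second-order recurrences in one stroke by comparing coefficients of the (unique) degree-at-most-one remainder. The paper instead multiplies the previous remainder by a single power of $x$: from $x^{n+1}\equiv A_{n}x^{2}+B_{n}x\equiv A_{n}(-px-q)+B_{n}x$ it reads off the coupled first-order system $A_{n+1}=-pA_{n}+B_{n}$, $B_{n+1}=-qA_{n}$, and then eliminates $B_{n}$ to arrive at the stated two-term recurrences. Your route is marginally more direct for the lemma as stated; the paper's route has the side benefit of producing the identity $B_{n}(p,q)=-qA_{n-1}(p,q)$ as an intermediate byproduct, which the authors reuse later (in Remark \ref{rem2} and in the proof of Theorem \ref{div-22}) to reduce all subsequent computations to the single sequence $A_{n}$. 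If you wanted that identity from your argument you would have to derive it separately. Either way, both arguments rest on the same two pillars you identify: uniqueness of the remainder modulo the monic quadratic, and the relation $x^{2}\equiv -px-q$.
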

\begin{proof}
Define $x^{n}\bmod{x^2+px+q}=A_{n}(p,q)x+B_{n}(p,q)$. The
expressions for $A_{0}, A_{1}, B_{0}, B_{1}$, are clear. In order to
shorten the notation put $A_{n}=A_{n}(p,q)$ and
$B_{n}=B_{n}(p,q)$. Then
\begin{equation*}
x^{n+1}\bmod{(x^2+px+q)}=A_{n}x^2+B_{n}x=A_{n}(-px-q)+B_{n}x=A_{n+1}x+B_{n+1}.
\end{equation*}
From the last equality, $A_{n+1}=-pA_{n}+B_{n}$ and $B_{n+1}=-qA_{n}.$
Eliminating $B_{n}$ from the first equation we get the recurrence relation
for $A_{n}(p,q)$ displayed in the statement of the Lemma. Similar reasoning
gives the recurrence relation for $B_{n}(p,q)$.
\end{proof}

From the above Lemma we get the following:

\begin{cor}\label{cor1}
If $f_{n,m,k}(a,x)$ is divisible by
$x^2+px+q$ for some $a, p, q\in\Q$ with $a\neq 0$  then
\begin{equation*}
  A_{n}(p,q)+A_{m}(p,q)+A_{k}(p,q)=0\quad\mbox{and}\quad
  a=-B_{n}(p,q)-B_{m}(p,q)-B_{k}(p,q).
\end{equation*}
\end{cor}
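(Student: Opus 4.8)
The plan is to read off the corollary as an immediate consequence of Lemma \ref{lem1} together with the $\Q$-linearity of reduction modulo $x^2+px+q$. First I would record that for any polynomials $g,h\in\Q[x]$ and scalars $\lambda,\mu\in\Q$ one has $(\lambda g+\mu h)\bmod(x^2+px+q)=\lambda\,(g\bmod(x^2+px+q))+\mu\,(h\bmod(x^2+px+q))$, since the remainder map is the projection of $\Q[x]$ onto the two-dimensional complement of the ideal $(x^2+px+q)$ and hence $\Q$-linear.

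Next I would reduce each of the four summands of $f_{n,m,k}(a,x)=x^n+x^m+x^k+a$ separately. By Lemma \ref{lem1} we have $x^j\bmod(x^2+px+q)=A_j(p,q)x+B_j(p,q)$ for each $j\in\{n,m,k\}$, whereas the constant term satisfies $a\bmod(x^2+px+q)=a=0\cdot x+a$ because $\deg a<2$. Abbreviating $A_j=A_j(p,q)$ and $B_j=B_j(p,q)$ and summing the four contributions via linearity gives
\[
f_{n,m,k}(a,x)\bmod(x^2+px+q)=\bigl(A_n+A_m+A_k\bigr)x+\bigl(B_n+B_m+B_k+a\bigr).
\]

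Finally I would invoke the defining property of polynomial division: $x^2+px+q$ divides $f_{n,m,k}(a,x)$ precisely when the displayed remainder is the zero polynomial, that is, when both of its coefficients vanish. This yields at once the two stated equations $A_n+A_m+A_k=0$ and $a=-(B_n+B_m+B_k)$. There is essentially no obstacle to overcome; the only points that deserve a word are the linearity of the reduction map and the observation that the constant term $a$ contributes the pair $(0,a)$ rather than being folded into the recursion of Lemma \ref{lem1}. The hypothesis $a\neq 0$ is not used in the implication itself and merely records that we are dealing with a genuine quadrinomial.
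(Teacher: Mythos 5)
Your proof is correct and matches the paper's intent exactly: the paper states this corollary as an immediate consequence of Lemma \ref{lem1} without further argument, and your spelled-out version (linearity of reduction modulo $x^2+px+q$, termwise application of the lemma, and vanishing of the remainder's two coefficients) is precisely the reasoning being invoked. Nothing is missing.
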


From this Corollary it follows that divisibility of $f_{n,m,k}(a,x)$ by $x^2+px+q$
for a fixed triple of exponents $n,m,k$ is equivalent to the characterization of
rational points on the curve
\begin{equation*}
C_{n,m,k}:\;A_{n}(p,q)+A_{m}(p,q)+A_{k}(p,q)=0.
\end{equation*}
Note that from the recurrence relation for $A_{n}(p,q)$, it follows easily that
$\op{deg}_{p}A_{n}=n-1$ and
$\op{deg}_{q}A_{n}=\left\lfloor\frac{n-3}{2} \right\rfloor+1$.
Define the polynomial
\begin{equation*}
F_{n,m,k}(p,q)=A_{n}(p,q)+A_{m}(p,q)+A_{k}(p,q)
\end{equation*}
and let $\overline{F}_{n,m,k}(p,q,r)=r^{n}F_{n,m,k}(p/r,q/r)$ be
the homogenization of the polynomial $F_{n,m,k}$. We also define:
\begin{equation*}
C_{n,m,k}(\Q)=\{(p:q:r)\in\mathbb{P}^{2}(\Q):\;\overline{F}_{n,m,k}(p,q,r)=0\},
\end{equation*}
the set of rational points on the curve $C_{n,m,k}$ together with
the points at infinity.

The study of the curve $C_{n,m,k}$ or more precisely its birational
models which are hyperelliptic, and the corresponding set
$C_{n,m,k}(\Q)$ will be the main object of study in this paper.

\begin{rem}\label{rem2}
{\rm The recurrence relations satisfied by $A_{n}(p,q)$ and
$B_{n}(p,q)$ allow us to get Binet type formula for these
polynomials. Indeed, using the standard method and initial
conditions for the sequence $A_{n}(p,q)$ we get that
\begin{equation}
\label{Binet}
A_{n}(p,q)=\frac{1}{\sqrt{p^2-4q}}\left(\left(\frac{\sqrt{p^2-4q}-p}{2}\right)^{n}-\left(\frac{-\sqrt{p^2-4q}-p}{2}\right)^{n}\right).
\end{equation}
A similar result can be given for $B_{n}(p,q)$ because $B_{n}(p,q)=-qA_{n-1}(p,q)$. }
\end{rem}

Now we are ready to prove the following:

\begin{thm}\label{thm:n=5}
Let $f_{5,m,k}(a,x)=x^{5}+x^{m}+x^{k}+a$ with $m,k\in\N$ and $5>m>k \geq 1$. Then the
following holds:
\begin{enumerate}
\item If $(m,k)\in\{(2,1),\;(4,1),\;(4,2),\;(4,3)\}$ then the set $C_{5,m,k}(\Q)$ is finite.
The only reducible $f_{5,m,k}(a,x)$ with quadratic factor is given by $(m,k)=(4,3)$ and $a=-1$
with factor $x^2+1$.

\item If $(m,k)\in \{(3,1),\;(3,2)\}$ then the set $C_{5,m,k}(\Q)$ is
infinite and is parameterized by the rational points on a certain elliptic curve of positive rank.

\end{enumerate}
\end{thm}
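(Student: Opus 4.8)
The plan is to apply Corollary~\ref{cor1}, which reduces the existence of a quadratic factor $x^2+px+q$ of $f_{5,m,k}(a,x)$ to locating rational points on the curve $C_{5,m,k}:F_{5,m,k}(p,q)=0$, together with reading off $a=-B_5(p,q)-B_m(p,q)-B_k(p,q)$. First I would compute the relevant $A_n$ explicitly from the recurrence of Lemma~\ref{lem1}, obtaining $A_1=1$, $A_2=-p$, $A_3=p^2-q$, $A_4=-p^3+2pq$, and $A_5=p^4-3p^2q+q^2$, and recall from Remark~\ref{rem2} that $B_n=-qA_{n-1}$. Substituting these into $F_{5,m,k}=A_5+A_m+A_k$ shows that each defining equation is a monic quadratic in $q$,
\begin{equation*}
q^2+\beta_{m,k}(p)\,q+\gamma_{m,k}(p)=0,
\end{equation*}
so completing the square with $s=2q+\beta_{m,k}(p)$ puts $C_{5,m,k}$ into the hyperelliptic form $s^2=D_{m,k}(p)$, where $D_{m,k}(p)=\beta_{m,k}(p)^2-4\gamma_{m,k}(p)$ is a quartic in $p$. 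Since a quartic model has genus $1$, every $C_{5,m,k}$ occurring in the theorem is (a model of) a genus one curve; this is exactly why $n=5$ is the last degree before the genus $2$ curves promised in the introduction.

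For part (1) I would treat each of $(m,k)\in\{(2,1),(4,1),(4,2),(4,3)\}$ in turn. Here the quartics are $D_{2,1}=5p^4+4p-4$, $D_{4,1}=5p^4-8p^3+4p^2-4$, $D_{4,2}=5p^4-8p^3+4p^2+4p$, and $D_{4,3}=5p^4-8p^3+6p^2-4p+1$. For each I would exhibit a rational point (for instance $(p,q)=(0,1)$ lies on $C_{5,4,3}$, coming from the factor $x^2+1$), transform to a Weierstrass model $E_{m,k}$, and verify by descent that $\op{rank}E_{m,k}(\Q)=0$. Then $E_{m,k}(\Q)$, hence $C_{5,m,k}(\Q)$, is finite, and I can enumerate all its points together with the points at infinity. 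For every such $(p,q)$ I would compute $a=-B_5-B_m-B_k$ and discard the degenerate values $a=0$, which do not correspond to a genuine constant term (e.g.\ the branch point $(p,q)=(0,0)$ on $C_{5,4,2}$ gives $a=0$). The assertion is that the only surviving quadrinomial with $a\neq 0$ is the one from $(m,k)=(4,3)$, $(p,q)=(0,1)$, giving $a=-1$ and the factor $x^2+1$; this one checks directly, since there $A_5+A_4+A_3=1+0-1=0$ and $B_5+B_4+B_3=0+1+0$, whence $a=-1$.

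For part (2) the two curves are $D_{3,1}=5p^4+2p^2-3=(p^2+1)(5p^2-3)$ and $D_{3,2}=5p^4+2p^2+4p+1$. Each carries an obvious rational point: $(p,s)=(1,2)$ on $C_{5,3,1}$ since $(1^2+1)(5\cdot1^2-3)=4$, and $(p,s)=(0,1)$ on $C_{5,3,2}$. I would convert each to Weierstrass form and show the resulting elliptic curve has positive rank by verifying that the image of such a point has infinite order, which can be certified, e.g.\ by checking that its reductions modulo several primes have growing order, or that its canonical height is nonzero. A single point of infinite order already forces $C_{5,m,k}(\Q)$ to be infinite, and pushing the multiples of these points through the parameterization yields infinitely many $a\in\Q$ for which $f_{5,m,k}(a,x)$ acquires a quadratic factor.

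The main obstacle is the rank computation in part (1): exhibiting one point of infinite order in part (2) is elementary, but proving that a genus one curve has \emph{no} rational points beyond a known finite list requires a complete $2$-descent (or an appeal to verified tables/software) to certify rank $0$. One must also correctly account for the points at infinity in the projective model $\overline{F}_{5,m,k}=0$ and weed out the spurious solutions with $a=0$ before concluding that $(m,k)=(4,3)$, $a=-1$ is the unique genuine example.
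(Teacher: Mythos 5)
Your proposal follows essentially the same route as the paper: complete the square in $q$ to obtain the quartic models $s^2=D_{m,k}(p)$ (your six quartics agree exactly with the paper's curves $H_{5,m,k}$), pass to elliptic curves, and separate the rank-$0$ cases from the rank-$1$ cases $(3,1)$ and $(3,2)$. The one place your plan as literally stated would stall is $(m,k)=(2,1)$ and $(4,1)$: those two quartics have \emph{no} rational points at all, so you cannot ``exhibit a rational point and transform to a Weierstrass model''; the paper instead shows $H_{5,4,1}$ is not even locally solvable at the prime $2$, and for $H_{5,2,1}$ maps to the rank-$0$, torsion-free curve $y^2+y=x^3+5x+1$ (its Jacobian) to conclude that the quartic has no rational points --- a standard fallback you would need to invoke once the search for a base point fails.
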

\begin{proof}
We perform case by case analysis.

Case $(m,k)=(2,1)$. Solving the equation
$F_{5,2,1}(p,q)=0$ in rational numbers $p,q$ is equivalent to the
study of rational points on the hyperelliptic quartic curve
\begin{equation*}
H_{5,2,1}:\;s^2=5p^4+4p-4,
\end{equation*}
where $s=\pm(2q-3p^2)$.
Setting
\[ (x,y)=((-10p^3+20p^2+1)/s^2, (25p^6-100p^5-50p^3+100p^2-80p-2 - s^3)/(2s^3)), \]
there results $E_{5,2,1}: y^2+y=x^3+5x+1$, an elliptic curve of rank 0 and trivial torsion.
It follows that the set $H_{5,2,1}(\Q)$ is empty.

\bigskip

Case $(m,k)=(3,1)$. Solving the equation $F_{5,3,1}(p,q)=0$ in rational numbers $p,q$ is
equivalent to the study of rational points on the hyperelliptic quartic curve
\begin{equation*}
H_{5,3,1}:\;s^2=(p^2+1)(5p^2-3),
\end{equation*}
where $s=\pm(2q-3p^2-1)$. Using the point $(p,s)=(1,2)$ as a zero point,
then $H_{5,3,1}$ is birationally equivalent to the elliptic curve
\begin{equation*}
E_{5,3,1}:\;Y^2=X^3-X^2+4X
\end{equation*}
under the mapping
\begin{equation*}
(p,s)=\phi(X,Y)=\left(\frac{X+4-Y}{3X-Y-4},\frac{2(X^3-12X^2+4X+16Y-16)}{(3X-Y-4)^{2}}\right).
\end{equation*}
We have that $\op{Tors}(E_{5,3,1}(\Q))\cong\{\cal{O},(0,0)\}$
and the rank of $E_{5,3,1}$ is one, with generator $P=(1,2)$.
Our reasoning shows that if $f_{5,3,1}(a,x)$ is divisible by $x^2+px+q$ then
$p=\phi_{1}(X,Y)$, where $\phi_{1}$ is the first coordinate of the
function $\phi$, and $q$'s can be computed (if needed) from the
relation $\phi_{2}(X,Y)=\pm(2q-1-3\phi_{1}(X,Y)^2)$. The corresponding
$a\in\Q$ can be computed from the expression given in Corollary
\ref{cor1}. In particular there are infinitely many such $a$
corresponding to the integer multiples of $P$. For example, the
point $-P$ corresponds to $p=-1, q=3$ and $a=-12$, which leads to the
factorization
\begin{equation*}
x^{5}+x^{3}+x-12=(x^2-x+3)(x^3+x^2-x-4).
\end{equation*}

\bigskip

Case $(m,k)=(4,1)$. Solving the equation
$F_{5,4,1}(p,q)=0$ in rational numbers $p, q$ is equivalent to the
study of rational points on the hyperelliptic quartic curve
\begin{equation*}
H_{5,4,1}:\;s^2=5p^4-8p^3+4p^2-4,
\end{equation*}
where $s=\pm(2q-3p^2+2p)$. However, the curve $H_{5,4,1}$ is not
locally solvable at the prime 2, and thus the set $H_{5,4,1}(\Q)$ is
empty.

\bigskip

Case $(m,k)=(3,2)$. Solving the equation
$F_{5,3,2}(p,q)=0$ in rational numbers $p,q$ is equivalent to the
study of rational points on the hyperelliptic quartic curve
\begin{equation*}
H_{5,3,2}:\;s^2=5p^4+2p^2+4p+1,
\end{equation*}
where $s=\pm(2q-3p^2-1)$. Using the point $(p,s)=(0,1)$ as a point
at infinity we find that $H_{5,3,2}$ is birationally equivalent to
the elliptic curve
\begin{equation*}
E_{5,3,2}:\;Y^2+Y=X^3-X^2-X+1
\end{equation*}
under the mapping
\begin{equation*}
(p,s)=\phi(X,Y)=\left(\frac{X-1}{Y-X+1},\frac{Y+X^3-3X^2+3X-1}{(Y-X+1)^{2}}\right).
\end{equation*}
The torsion subgroup of $E_{5,3,2}(\Q)$ is trivial and the rank of
the curve is one, with generator $P=(1,0)$. The point $P$ corresponds to $a=1$,
and the point $2P$, for example, leads to $a=6$ and $a=-363$. The latter
gives the factorization
\begin{equation*}
x^5+x^3+x^2-363=(x^2-2x+11)(x^3+2x^2-6x-33).
\end{equation*}

\bigskip

Case $(m,k)=(4,2)$. Solving the equation
$F_{5,4,2}(p,q)=0$ in rational numbers $p, q$ is equivalent to the
study of rational points on the hyperelliptic quartic curve
\begin{equation*}
H_{5,4,2}:\;s^2=p(5p^3-8p^2+4p+4),
\end{equation*}
where $s=\pm(2q-3p^2+2p)$. Using the point $(p,s)=(0,0)$ as a point
at infinity we find that $H_{5,4,2}$ is birationally equivalent to
the elliptic curve
\begin{equation*}
E_{5,4,2}:\;Y^2+Y=X^3+X^2-2X+1.
\end{equation*}
which has trivial torsion and rank 0. It follows that $H_{5,4,2}(\Q)=\{(0,0)\}$.
The point $(0,0)$ leads to $a=0$, with no corresponding quadrinomial.

\bigskip

Case $(m,k)=(4,3)$. Solving the equation
$F_{5,4,3}(p,q)=0$ in rational numbers $p, q$ is equivalent to the
study of rational points on the hyperelliptic quartic curve
\begin{equation*}
H_{5,4,3}:\;s^2=(p-1)(5p^3-3p^2+3p-1),
\end{equation*}
where $s=\pm(2q-3p^2+2p-1)$. Using the point $(p,s)=(1,0)$ as a
point at infinity we find that $H_{3,4,5}$ is birationally
equivalent to the elliptic curve
\begin{equation*}
E_{5,4,3}:\;Y^2+Y=X^3
\end{equation*}
with rank 0 and $\op{Tors}(E_{5,4,3}(\Q))=\{\cal{O}, (0,0), (0,-1)\}$.
It follows that $H_{5,4,3}(\Q)=\{(0,-1), (1,0)\}$. The point $(0,-1)$ leads to
$a=-1$ and the point $(1,0)$ leads to $a=0$.

Tying now all the results together we get the statement of the Theorem.
\end{proof}

\section{Reducible quadrinomials $f_{n,m,k}(a,x)$ with $\op{deg}f=6$}\label{section4}

In the previous section the problem of finding
reducible quadrinomials $f_{n,m,k}(a,x)$ with $\op{deg}f=5$ and $a\in\Q$
resulted in the examination of the set of rational
points on several curves of genus one. In this section we examine
the divisibility of a sextic $f_{6,m,k}(a,x)$ by the quadratic polynomial $x^2+px+q$,
which results in the investigation of several curves of genus two.
Computing all the rational points on any specific curve of genus 2
is unfortunately still very much an open problem. In some cases,
in particular if the Jacobian of the curve has rank at most 1, then
Chabauty arguments can work. Otherwise elliptic Chabauty techniques
can sometimes apply, occasionally with a great deal of associated work,
allowing all rational points to be determined. But in other cases, the methods
do not fall to computation. If a curve takes the form $y^2=g(x)$ for an
irreducible quintic polynomial $g$, and has Jacobian of rank at least 2, then
the only approach known to us is to factor over the quintic field defined by a root
of $g$, and try to apply elliptic Chabauty techniques. But the associated
arithmetic often leaves a machine churning.  In what follows, some results are absolute,
when we have been able to determine all the rational points on a given curve;
and some results are conjectural, when we have been unable to prove
a given set of points (usually found by search up to a height bound of $10^6$)
is complete. We introduce an Asterisked Theorem, where cases without asterisk
are absolute, and cases with asterisk represent instances where the reader will know
that the result depends upon a set of listed points on some curve of genus 2 being complete.

\begin{thmast}\label{thm:n=6}
Let $f_{6,m,k}(a,x)=x^{6}+x^{m}+x^{k}+a$, $m,k\in\N$ with $6>m>k \geq 1$ and suppose
that $f_{6,m,k}(a,x)$ is divisible by $x^2+px+q$. Then the following hold, with quadratic factors listed respectively:
\begin{enumerate}
\item* If $(m,k)=(2,1)$ then $a\in\left\{ -\frac{5795}{1728},-\frac{3655}{1728}, -\frac{1115}{1728}, \frac{10}{27}, \frac{51}{64}\right\}$, with factors
\[ x^2-\frac{3x}{2}+\frac{19}{12}, \quad x^2-\frac{3x}{2}+\frac{17}{12}, \quad x^2+\frac{x}{2}-\frac{5}{12}, \quad x^2+x+\frac{1}{3}, \quad x^2+\frac{x}{2}+\frac{3}{4}. \]
\item* If $(m,k)=(3,1)$ then $a\in\left\{-\frac{48814883}{216000}, -\frac{2899}{1728}, -\frac{21}{64}, \frac{639}{64}\right\}$, with factors
\[ x^2-\frac{5x}{2}+\frac{373}{60}, \quad x^2+\frac{x}{2}-\frac{13}{12}, \quad x^2+\frac{x}{2} +\frac{3}{4}, \quad x^2-\frac{5x}{2}+\frac{9}{4}. \]
\item
If $(m,k)=(4,1)$ then $a\in\left\{-\frac{34881}{8}, \frac{1441}{8}\right\}$,
with factors
\[ x^2-4x+\frac{33}{2}, \quad x^2-4x+\frac{11}{2}. \]

\item* If $(m,k)=(5,1)$ then $a\in\left\{-\frac{2869795813}{512}, -21, -\frac{3}{8}, 1, 206682\right\}$, with factors
\[ x^2-13x+\frac{1403}{8}, \quad x^2+2x+3, \quad x^2+x-\frac{1}{2}, \quad (x+1)^2, \quad x^2-13x+57. \]
\item
If $(m,k)=(3,2)$ then $a\in\left\{-\frac{474281}{46656}, \frac{3}{64}\right\}$, with factors
\[ x^2+\frac{3x}{2}+\frac{73}{36}, \quad x^2+\frac{3x}{2}+\frac{3}{4}. \]

\item
If $(m,k)=(4,2)$ then $a=q^3-q^2+q$, $q\in\Q$, with factor $x^2+q$.
\item* If $(m,k)=(5,2)$ then $a\in\left\{-1, -\frac{9}{64}, -\frac{2914724300237}{21596433224192},
  \frac{1983235848957}{14467923038863}\right\}$, with factors
\[ x^2-x+1, \quad x^2-x+\frac{3}{4}, \quad x^2+\frac{x}{59}-\frac{3709}{27848}, \quad x^2+\frac{x}{59}+\frac{3267}{24367}. \]

\item
If $(m,k)=(4,3)$ then $a\in\left\{-\frac{729}{8}, \frac{88}{27} \right\}$, with factors
\[ x^2+2x+\frac{9}{2}, \quad x^2+2x+\frac{4}{3}. \]

\item* If $(m,k)=(5,3)$ then $a\in\left\{-27, -\frac{27}{8}, \frac{16651236563}{1313651921408},
  \frac{222528399633}{1313651921408}, \frac{24}{125}, \frac{3}{8}, 1 \right\}$, with factors
\[ x^2+2x+3, \; x^2-x+\frac{3}{2}, \; x^2-\frac{9x}{37}+\frac{611}{10952}, \; x^2-\frac{9x}{37}+\frac{7209}{10952}, \; x^2+2x+\frac{4}{5}, \; x^2-x+\frac{1}{2}, \; x^2+1. \]

\item
If $(m,k)=(5,4)$ then the set, say $\cal{A}$, of those $a\in\Q$ such that $f(a,x)$ is
divisible by $x^2+px+q$ is infinite. More precisely the set $\cal{A}$ is parameterized by
the rational points on the rank one elliptic curve
\begin{equation*}
E_{6,5,4}:\;Y^2+XY+Y=X^3-X^2-2X,
\end{equation*}
with $\op{Tors}(E_{6,5,4}(\Q))=\{\cal{O},(-1,0)\}$, and where a generator of infinite order
is $P=(0,0)$.

\end{enumerate}
\end{thmast}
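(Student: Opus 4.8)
The plan is to mirror the proof of Theorem \ref{thm:n=5}: reduce the divisibility condition to finding rational points on the curve $C_{6,5,4}$, observe that \emph{this} curve has genus $1$ rather than $2$, and then read the answer off its Mordell--Weil group. The feature making $(m,k)=(5,4)$ exceptional among the sextic cases is that the defining polynomial turns out to be quadratic, not cubic, in $q$.

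First, by Corollary \ref{cor1}, the quadrinomial $f_{6,5,4}(a,x)$ is divisible by $x^2+px+q$ exactly when $F_{6,5,4}(p,q)=A_6+A_5+A_4=0$, and then $a=-B_6-B_5-B_4$. Using Lemma \ref{lem1} I would compute
\begin{equation*}
F_{6,5,4}(p,q)=(1-3p)q^2+(4p^3-3p^2+2p)q-(p^5-p^4+p^3).
\end{equation*}
Since $\deg_q A_4=1$ while $\deg_q A_5=\deg_q A_6=2$, this is quadratic in $q$, so completing the square reduces $C_{6,5,4}$ to a curve $s^2=(\text{quartic in }p)$. The discriminant with respect to $q$ is
\begin{equation*}
(4p^3-3p^2+2p)^2-4(1-3p)(-p^5+p^4-p^3)=p^2(4p^4-8p^3+9p^2-8p+4),
\end{equation*}
and it is precisely the square factor $p^2$ that lowers the genus from $2$ to $1$. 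Writing $s=(2(1-3p)q+4p^3-3p^2+2p)/p$, the curve becomes birational to the genus-one quartic
\begin{equation*}
H_{6,5,4}:\; s^2=4p^4-8p^3+9p^2-8p+4.
\end{equation*}

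Next I would put $H_{6,5,4}$ into Weierstrass form. It carries the obvious rational point $(p,s)=(0,2)$ (and two rational points at infinity, since the leading coefficient $4$ is a square), and applying the standard birational transformation taking a nonsingular quartic with a rational point to Weierstrass form should yield
\begin{equation*}
E_{6,5,4}:\; Y^2+XY+Y=X^3-X^2-2X.
\end{equation*}
Tracking the substitutions backwards expresses $p$, then $q$, then $a$ as rational functions on $E_{6,5,4}$, which is the asserted parameterization of $\cal{A}$ by $E_{6,5,4}(\Q)$.

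It remains to determine $E_{6,5,4}(\Q)$. First, reduction modulo primes of good reduction $\ell\nmid 297$ (the discriminant equals $3^3\cdot 11$), e.g.\ $\ell=5,7$, together with the fact that $(-1,0)$ is the unique rational $2$-torsion point, pins down $\op{Tors}(E_{6,5,4}(\Q))=\{\cal{O},(-1,0)\}$. Since $P=(0,0)$ lies on the curve but is not this torsion point, $P$ has infinite order, so $\op{rank}\geq 1$ and in particular $\cal{A}$ is infinite; for concreteness $2P=(3,2)$, obtained from the tangent $Y=-2X$ at $P$ (which meets the curve again at $(3,-6)$, with negative $(3,2)$ under $(X,Y)\mapsto(X,-Y-X-1)$). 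The genuinely nontrivial step — the main obstacle — is the \emph{upper} bound $\op{rank}=1$: I would establish it by a $2$-descent on $E_{6,5,4}$ (equivalently, a Selmer-group computation) bounding the rank by $1$, which together with $P$ forces equality. Finally, as $a$ is a non-constant rational function on $E_{6,5,4}$ it has finite fibres, so the infinitely many rational points produce infinitely many values of $a$, completing the proof that $\cal{A}$ is infinite and parameterized by $E_{6,5,4}(\Q)$.
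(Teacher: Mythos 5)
Your argument addresses only item (11), the case $(m,k)=(5,4)$, and for that single case it is correct and essentially identical to the paper's: the same quadratic-in-$q$ polynomial $F_{6,5,4}(p,q)=(1-3p)q^2+(4p^3-3p^2+2p)q-(p^5-p^4+p^3)$, the same square factor $p^2$ in the discriminant producing the genus-one quartic $H_{6,5,4}:\;s^2=4p^4-8p^3+9p^2-8p+4$, and the same Weierstrass model $E_{6,5,4}$ with torsion $\{\mathcal{O},(-1,0)\}$, rank $1$ and generator $P=(0,0)$ (the paper likewise records $2P=(3,2)$). Your extra remarks on the $2$-descent needed for the rank upper bound are reasonable, though the paper simply delegates that computation to Magma.

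The genuine gap is that the theorem has ten enumerated cases and you prove one of them. The other nine pairs $(m,k)$ lead, after the same reduction via Corollary \ref{cor1} and completing the square in $q$, not to genus-one curves but (with the single further exception of $(m,k)=(4,2)$) to genus-two hyperelliptic sextics $H_{6,m,k}$, which require entirely different tools. In the unasterisked cases $(4,1)$, $(3,2)$ and $(4,3)$ the Jacobian has rank $1$ and the paper determines all rational points by Chabauty's method; in the case $(4,2)$ the discriminant again acquires a square factor $p^2$, the branch $p=0$ yields the infinite family $a=q^3-q^2+q$ with factor $x^2+q$, and the complementary genus-one quartic $s^2=(p^2-1)(p^2+2)$ maps to a rank-zero elliptic curve contributing only $a=0$. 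The asterisked cases $(2,1)$, $(3,1)$, $(5,1)$, $(5,2)$, $(5,3)$ have Jacobians of rank at least $2$; there even the paper only lists the points found by search (which is precisely what the asterisk signals), but your proposal does not carry out the reductions, exhibit those point lists, or derive the corresponding values of $a$ and the quadratic factors that constitute the actual content of items (1)--(10). As written, the proposal therefore establishes only the final item of the statement.
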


\begin{proof}
As before, we perform a case by case analysis.

\bigskip

Case $(m,k)=(2,1)$. Solving the equation $F_{6,2,1}(p,q)=0$ in rational numbers $p,q$, is
equivalent to the study of rational points on the hyperelliptic sextic curve of genus 2
\begin{equation*}
H_{6,2,1}:\;s^2=p(p^5-3p+3),
\end{equation*}
where $s=\pm(3pq-2p^3)$. On the curve $H_{6,2,1}$ we have rational
points with $p\in\left\{-\frac{3}{2}, 0, \frac{1}{2}, 1\right\}$ and
these numbers lead to the $a$'s displayed in the statement of the
theorem. The rank of the Jacobian variety of $H_{6,2,1}$ is equal to 2, and traditional
Chabauty arguments are therefore unavailable.
One can factor over the quintic field $\Q(\theta)$, $\theta^5-3\theta+3=0$, to
obtain
\[ 3(3P+(\theta^4-3)) (3P^4-\theta^4 P^3-\theta^3 P^2-\theta^2 P-\theta) = \square \]
with $P=1/p$. This allows deduction of elliptic quartics of the type
\[ 3P^4-\theta^4 P^3-\theta^3 P^2-\theta^2 P-\theta = \delta \square \]
for a finite number of $\delta \in \Q(\theta)$, and there is a possibility that
an approach using elliptic Chabauty methods will work. However, we were unsuccessful
in getting this method to work, and have been unable to find explicitly all points on $H_{6,2,1}$.

\bigskip

Case $(m,k)=(3,1)$. Solving the equation
$F_{6,3,1}(p,q)=0$ in rational numbers $p, q$ is equivalent to the
study of rational points on the hyperelliptic sextic curve
\begin{equation*}
H_{6,3,1}:\;s^2=4p^6+4p^3+12p+1,
\end{equation*}
where $s=\pm(6pq-4p^3+1)$. On the curve $H_{6,3,1}$ we have rational
points with $p\in\left\{-\frac{5}{2}, 0,\frac{1}{2}\right\}$ and these
numbers lead to the $a$'s displayed in the statement of the theorem.
The rank of the Jacobian variety is 2, so traditional Chabauty techniques are
not applicable, and we are unable to determine explicitly all the rational points
on $H_{6,3,1}$.

\bigskip

Case $(m,k)=(4,1)$. Solving the equation
$F_{6,4,1}(p,q)=0$ in rational numbers $p, q$ is equivalent to the
study of rational points on the hyperelliptic sextic curve
\begin{equation*}
H_{6,4,1}:\;s^2=p(p+1)(p^4-p^3+2p^2-2p+3),
\end{equation*}
where $s=\pm(3pq-2p^3-p)$. On the curve $H_{6,4,1}$ we have rational
points with $p=-4,-1,0$ and these lead to the $a$'s displayed in
the statement of the theorem.
The rank of the Jacobian variety of $H_{6,4,1}$ is 1, and Chabauty's method
as implemented in Magma~\cite{Mag} is able to obtain the explicit list of rational points
on $H_{6,4,1}$, precisely the three above.

\bigskip

Case $(m,k)=(5,1)$. Solving the equation
$F_{6,5,1}(p,q)=0$ in rational numbers $p, q$ is equivalent to the
study of rational points on the hyperelliptic sextic curve
\begin{equation*}
H_{6,5,1}:\;s^2=4p^6-8p^5+5p^4+12p-4,
\end{equation*}
where $s=\pm(2(-3p+1)q+4p^3-3p^2)$. On the curve $H_{6,5,1}$ we have
rational points with $p\in\left\{-1, \frac{1}{3}, 1, 2, -13\right\}$ and these
numbers lead to the $a$'s displayed in the statement of the theorem.
The rank of the Jacobian variety is 3, and we have not attempted to find
explicitly all the rational points.

\bigskip

Case $(m,k)=(3,2)$. Solving the equation
$F_{6,3,2}(p,q)=0$ in rational numbers $p, q$ is equivalent to the
study of rational points on the hyperelliptic sextic curve
\begin{equation*}
H_{6,3,2}:\;s^2=4p^6+4p^3-12p^2+1,
\end{equation*}
where $s=\pm(6pq-4p^3+1)$. On the curve $H_{6,3,2}$ we have rational
points with $p\in\left\{0, \frac{3}{2}\right\}$ and these numbers
lead to the $a$'s displayed in the statement of the theorem.
The rank of the Jacobian variety is 1, and Magma's Chabauty routines
determine the full set of rational points as precisely those given above.

\bigskip

Case $(m,k)=(4,2)$. Solving the equation
$F_{6,4,2}(p,q)=0$ in rational numbers $p,q$ leads to $p^2(p^2-1)(p^2+2)=\square$,
so leads either to $p=0$, or to study of rational points on the hyperelliptic
quartic curve
\begin{equation*}
H_{6,4,2}:\;s^2=(p+1)(p-1)(p^2+2),
\end{equation*}
where $s=\pm (3q-2p^2-1)$.
In this latter case, the curve is birationally equivalent to the elliptic curve
\begin{equation*}
E_{6,4,2}:\;Y^2=X^3+X^2+8X+8.
\end{equation*}
of rank 0 and $\op{Tors}(E_{6,4,2}(\Q))=\{\cal{O},(2,-6),(2,6),(-1,0)\}$.
The torsion points all correspond to $a=0$. \\

In the first case, then $p=0$, $a=q^3-q^2+q$, and the cubic $x^3+x^2+x+a$
has the  rational root $x=-q$.

\bigskip

Case $(m,k)=(5,2)$. Solving the equation
$F_{6,5,2}(p,q)=0$ in rational numbers $p,q$ is equivalent to the
study of rational points on the hyperelliptic sextic curve
\begin{equation*}
H_{6,5,2}:\;s^2=p(4p^5-8p^4+5p^3-12p+4),
\end{equation*}
where $s=\pm(2(-3p+1)q+4p^3-3p^2)$. On the curve $H_{6,5,2}$ we have
rational points with $p\in\left\{-1, 0, \frac{1}{3}, \frac{1}{59}\right\}$ and
these numbers lead to the $a$'s displayed in the statement of the
theorem.
The rank of the Jacobian variety is 3, and traditional Chabauty arguments
do not apply.  As in the case $(m,k)=(2,1)$ we attempted an attack
using elliptic Chabauty methods, but were unsuccessful; so
have been unable to find explicitly all points on $H_{6,5,2}$.

\bigskip

Case $(m,k)=(4,3)$. Solving the equation
$F_{6,4,3}(p,q)=0$ in rational numbers $p,q$ is equivalent to the
study of rational points on the hyperelliptic sextic curve
\begin{equation*}
H_{6,4,3}:\;s^2=4p^6+4p^4+4p^3+4p^2-4p+1,
\end{equation*}
where $s=\pm(6pq-4p^3-2p+1)$. On the curve $H_{6,4,3}$ we have
rational points with $p=0, 2$ and this number leads to the $a$'s
displayed in the statement of the theorem.
The rank of the Jacobian variety is 1, and Magma's Chabauty routines
is successful in finding all the rational points, which are precisely as above.

\bigskip

Case $(m,k)=(5,3)$. Solving the equation
$F_{6,5,3}(p,q)=0$ in rational numbers $p,q$ is equivalent to the
study of rational points on the hyperelliptic sextic curve
\begin{equation*}
H_{6,5,3}:\;s^2=4p^6-8p^5+5p^4+4p^3+2p^2+1,
\end{equation*}
where $s=\pm(2(-3p+1)q+4p^3-3p^2-1)$. On the curve $H_{6,5,3}$ we
have rational points with $p\in\left\{-1, -\frac{9}{37}, 0, \frac{1}{3}, 2, \right\}$
and these numbers lead to the $a$'s displayed in the statement of the theorem.
The rank of the Jacobian variety is 2, and we are unable to determine explicitly
all the rational points.

\bigskip

Case $(m,k)=(5,4)$. Solving the equation
$F_{6,5,4}(p,q)=0$ in rational numbers $p,q$ leads to $p^2(4p^4-8p^3+9p^2-8p+4)=\square$,
so leads to $p=0$, which in turn leads to $q=0$, or to the study of rational points on the
hyperelliptic quartic curve
\begin{equation*}
H_{6,5,4}:\;s^2=4p^4-8p^3+9p^2-8p+4,
\end{equation*}
where $s=\pm(2(-3p+1)q/p+4p^2-3p+2)$.
Taking the point $(0,2)$ as a point at infinity, we get that
$H_{6,5,4}$ is birationally equivalent to the elliptic curve
\begin{equation*}
E_{6,5,4}:\;Y^2+XY+Y=X^3-X^2-2X
\end{equation*}
We have that $\op{Tors}(E_{6,5,4}(\Q))=\{\cal{O}, (-1,0)\}$, and that the rank of
$E_{6,5,4}(\Q)$ is 1, with generator $P=(0,0)$. For example, the point $2P=(3,2)$ leads to $a=18$
and $a=-\frac{194481}{512}$. We have the following factorization in the case $a=18$:
\begin{equation*}
x^6+x^5+x^4+18=(x^2+3x+3)(x^4-2x^3+4x^2-6x+6).
\end{equation*}

Tying now all these results together we get the statement of the theorem.
\end{proof}

\begin{rem}
{\rm
There are some instances where $f_{6,m,k}(a,x)$ can split as the product of two
cubics, for example when $(m,k)=(5,3)$:
\begin{align*}
(x^3-x^2+x-\frac{1}{3})(x^3+2x^2+x+\frac{1}{3})= & x^6+x^5+x^3-\frac{1}{9}, \\
(x^3+ \frac{1}{2} x+\frac{1}{4})(x^3+x^2-\frac{1}{2} x+\frac{1}{4})= & x^6+x^5+x^3+\frac{1}{16}, \\
(x^3-\frac{1}{4} x^2-\frac{3}{32} x+\frac{117}{512})(x^3+\frac{5}{4} x^2+\frac{17}{32} x+\frac{507}{512}) = & x^6+x^5+x^3+\frac{59319}{262144}.
\end{align*}
For all $(m,k)$ except ($4,2)$ and $(5,4)$, the variety that parameterizes such
examples is a non-hyperelliptic curve of genus 4, and so other than searching for
small points (resulting in the above examples) we have not carried investigation further
here. In the remaining two cases, we can describe precisely when the sextic splits as
the product of two cubics. \\
\begin{thm}
The quadrinomial $x^6+x^4+x^2+a$ is the product of two cubics precisely when
$a=-\frac{(u-1)^2(u+1)^2(3+u^2)^2}{64u^2}$, for $u \in \Q\setminus\{0\}$.
The quadrinomial $x^6+x^5+x^4+a$, $a \neq 0$, cannot split as the product of two cubics.
\end{thm}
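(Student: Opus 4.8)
The plan is to treat both sextics by writing each as a product of two monic rational cubics and comparing coefficients; the only structural difference is that $x^6+x^4+x^2+a$ is even in $x$ while $x^6+x^5+x^4+a$ is not. For the first quadrinomial, write $x^6+x^4+x^2+a=(x^3+bx^2+cx+d)(x^3+ex^2+fx+g)$ over $\Q$. The coefficient of $x^5$ gives $e=-b$. Rather than attack the full system directly, I would pass to the combinations $s=c+f$, $\epsilon=f-c$, $\sigma=d+g$, $\delta=g-d$: the coefficient of $x^4$ gives $s=1+b^2$, while vanishing of the $x^3$- and the linear coefficients gives $\sigma=-b\epsilon$ and $s\sigma=\epsilon\delta$, whence $\epsilon\,(bs+\delta)=0$. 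The branch $\delta=-bs$ forces, through the remaining ($x^2$) equation, $\epsilon^2=-3b^4-2b^2-3$, which is negative for every real $b$ and so impossible; hence $\epsilon=0$, i.e. $f=c$ and then $\sigma=0$, $g=-d$. In other words any factorization must satisfy $Q_2(x)=-Q_1(-x)$, respecting the symmetry $x\mapsto-x$. Now $c=(1+b^2)/2$ together with the $x^2$-coefficient yields $d=\frac{(b^2-1)(b^2+3)}{8b}$ (forcing $b\neq0$), and $a=dg=-d^2$ is exactly the displayed expression with $u=b$. Conversely each $u\in\Q\setminus\{0\}$ gives the corresponding splitting, with $u=\pm1$ recovering the degenerate value $a=0$.

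For the second quadrinomial the evenness is gone, so I would run the same bookkeeping without the symmetry shortcut. Writing $x^6+x^5+x^4+a=(x^3+bx^2+cx+d)(x^3+ex^2+fx+g)$, the $x^5$-coefficient gives $e=1-b$, and with $S=c+f$, $T=f-c$, $\Sigma=d+g$, $\Delta=g-d$ the $x^4$-coefficient fixes $S=b^2-b+1>0$. The coefficients of $x^3$ and $x$ then express $\Sigma$ and $\Delta$ rationally in terms of $T$ and $b$ (in particular $S\Sigma=T\Delta$, so $T=0$ is quickly seen to contradict $\Sigma=-S/2$), leaving the $x^2$-coefficient as one further relation among $T$ and $b$. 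Since $a=dg=\tfrac14(\Sigma^2-\Delta^2)$ and $\Delta=S\Sigma/T$, one gets $a=\Sigma^2(T^2-S^2)/(4T^2)$, so $a\neq0$ is equivalent to $T\neq\pm S$ and $\Sigma\neq0$. Substituting $T=SU$ and viewing the surviving relation as a quadratic in $w:=1-2b$, solvability for a rational $w$ is equivalent to $-3U^6-6U^4+5U^2+4$ being a rational square. Hence a cubic-times-cubic splitting with $a\neq0$ is the same as a rational point with $U\notin\{0,\pm1\}$ on the genus-two curve
\[
D:\quad V^2=-3U^6-6U^4+5U^2+4=(1-U^2)(3U^4+9U^2+4),
\]
the values $U\in\{0,\pm1\}$ being exactly the degenerate locus ($U=0$, i.e. $T=0$) and the locus $a=0$ ($U=\pm1$, i.e. $T=\pm S$).

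It therefore remains to prove that $D(\Q)=\{(0,\pm2),(\pm1,0)\}$, and determining these points is the heart, and main obstacle, of the argument. Two features make it tractable. First, $3U^4+9U^2+4>0$ for all real $U$, so any real point has $|U|\le1$, already a strong restriction. Second, the resultant of the factors $1-U^2$ and $3U^4+9U^2+4$ is a power of $2$, so a routine $2$-descent writes every rational point as $1-U^2=\delta\,(\,\cdot\,)^2$ and $3U^4+9U^2+4=\delta\,(\,\cdot\,)^2$ with $\delta$ ranging over a finite set of squarefree $2$-power values, reducing $D(\Q)$ to the rational points on a few genus-one curves. Equivalently, $D$ carries the two elliptic quotients $V^2=-3Z^3-6Z^2+5Z+4$ and $Y^2=Z(-3Z^3-6Z^2+5Z+4)$ under $Z=U^2$; I expect the clean finish to be either to exhibit one quotient as having rank $0$ (so that $D(\Q)$ injects into a finite, explicitly listable set), or, should the Jacobian of $D$ have rank at most $1$, to apply Chabauty's method to $D$ directly as in the cases $(4,1)$, $(3,2)$, $(4,3)$ above. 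Verifying the relevant rank and carrying out the ensuing point count is precisely the genus-two computation flagged at the start of this section; once $D(\Q)$ is pinned down, every rational point has $U\in\{0,\pm1\}$ and hence $a=0$, so no splitting into two cubics occurs for $a\neq0$.
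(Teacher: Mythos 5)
Your treatment of $x^6+x^4+x^2+a$ is correct and complete, and it is somewhat more self-contained than the paper's: where the paper invokes Lemma 29 of Schinzel to assert that any cubic--cubic splitting of this even sextic must take the form $-h(x)h(-x)$, you derive the symmetry $f=c$, $g=-d$ directly, by showing the alternative branch $\delta=-bs$ forces $\epsilon^2=-3b^4-2b^2-3<0$. The resulting parametrization $d=\frac{(b^2-1)(b^2+3)}{8b}$, $a=-d^2$ agrees exactly with the paper's $v,w,a$ (with $u=b$). For $x^6+x^5+x^4+a$ your elimination is also sound as far as it goes: I checked that the $x^2$-coefficient condition, after substituting $S=(w^2+3)/4$ and $T=SU$, does collapse to the quadratic $-U(U^2+3)w^2+4(U^2+1)w-U(3U^2+1)=0$ in $w=1-2b$, whose discriminant condition is $(1-U^2)(3U^4+9U^2+4)=\square$, and that $T=0$ and the loci $U=\pm1$, $\Sigma=0$ all force $a=0$. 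This is a different genus-two model from the paper's curve $Y^2=X(4X^4+X^3+6X^2+X+4)$, obtained by a different elimination, but it encodes the same problem.

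The genuine gap is that you never determine $D(\Q)$ for your curve $D:\;V^2=(1-U^2)(3U^4+9U^2+4)$, and this determination \emph{is} the second assertion of the theorem. You correctly identify it as ``the heart, and main obstacle,'' but then only list candidate strategies (a $2$-power descent justified by the resultant $256$, the two elliptic quotients in $Z=U^2$, or Chabauty on $D$), hedged with ``I expect the clean finish to be\dots''; no rank of any quotient or of $\operatorname{Jac}(D)$ is computed, and no point count is executed. The real-locus bound $|U|\le 1$ does not restrict rational points to a finite set, and without a verified rank-$0$ quotient or a completed Chabauty argument the statement $D(\Q)=\{(0,\pm2),(\pm1,0)\}$ remains a conjecture, so the claim that $x^6+x^5+x^4+a$ with $a\neq0$ never splits into two cubics is not established. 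The paper closes exactly this step by exhibiting its model as having Jacobian of rank $1$ and running Magma's Chabauty routine to list all rational points; you would need to do the analogous computation for $D$ (e.g.\ verify that one of your elliptic quotients, such as $Y^2=X^3-6X^2-15X+36$ obtained from $V^2=-3Z^3-6Z^2+5Z+4$, has rank $0$, or that $\operatorname{Jac}(D)$ has rank $\le1$ and apply Chabauty) before the second half of the theorem is proved.
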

\begin{proof}
Case $(m,k)=(4,2)$. We have $f_{6,4,2}(a,x)=g(x^2)$ for a cubic polynomial $g$, and from Lemma 29 of \cite{Sch1}
\begin{equation*}
f_{6,4,2}(a,x)=-h(x)h(-x),
\end{equation*}
where $h(x)=x^3+ux^2+vx+w$, say. Comparing coefficients gives the (triangular) system
of equations
\begin{equation*}
1+u^2 - 2v=0,\quad 1-v^2+2uw=0,\quad a+w^2=0;
\end{equation*}
solving this system with respect to $v,w$ and $a$,
\begin{equation*}
v=\frac{u^2+1}{2}, \qquad w=\frac{(u-1)(u+1)(u^2+3)}{8u}, \qquad a=-\frac{(u-1)^2(u+1)^2(3+u^2)^2}{64u^2}.
\end{equation*}
Then $f_{6,4,2}(a,x)=-h(x)h(-x)$, where
\begin{equation*}
h(x)=x^3+u x^2+\frac{u^2+1}{2}x+\frac{(u-1)(u+1)(u^2+3)}{8u}.
\end{equation*}

\bigskip

\noindent
Case $(m,k)=(5,4)$. Suppose that
\[ x^6+x^5+x^4+a = (x^3+p x^2+q x+r)(x^3+s x^2+t x+u). \]
Comparing coefficients,
\[ -1+p+s=0, \; -1+q+p s+t=0, \; r+q s+p t+u=0, \; r s+q t+p u=0, \; r t+q u=0. \]
Eliminating $r,s,t,u$,
\begin{equation}
\label{deg6}
-p^2(p^2-p+1)^2 + (p^2-p+1)(3p^2-p+1)q-(3p^2-p+2)q^2+2q^3 = 0,
\end{equation}
a curve of genus 2. Under the mapping
\begin{align*}
& (X,Y)=( (p^2-p+1-q)/q, \\
& (-2(p-1)p(p^2-p+1)^2+(p^2-p+1)(5p^2-5p+1)q-(5p^2-3p+1)q^2+2q^3)/q^3 )
\end{align*}
we obtain a hyperelliptic model
\[ C: \; Y^2 = X(4X^4+X^3+6X^2+X+4). \]
Now the Jacobian of $C$ has rank 1, and Magma's Chabauty routines determine that
the only finite rational points on $C$ are $(X,\pm Y)=(0,0),(1,4)$. This in turn
gives $(p,q)=(0,0),(1,1),(\frac{1}{2},\frac{3}{8})$ as the complete set of finite
rational points on (\ref{deg6}), leading only to $a=0$.
\end{proof}

}
\end{rem}

\section{Reducible quadrinomials $f_{n,m,k}(a,x)$ with $\op{deg}f\geq 7$}\label{section5}

The problem of divisibility of $f(a,x)=x^{n}+x^{n-m}+x^{n-2m}+a$ by $x^2+px+q$
for some $p, q, a\in\Q$ can be reduced to the study of rational points on certain
hyperelliptic curves. More precisely:
\begin{thmast}
Let $f(a,x)=x^n+x^{n-m}+x^{n-2m}+a,\;n>2m\geq 3$ and suppose that
$f(a,x)$ is divisible by $x^2+px+q$ for some $a, p, q\in\Q$ with
$a\neq0$. Then there exists $t \in \Q$ such that $q=tp^2$
and
\begin{equation*}
A_{n-m}(1,t)^{2}-4A_{n-2m}(1,t)A_{n}(1,t)=\square.
\end{equation*}
In particular if $m=1$, we have the following results:
\begin{enumerate}
\item If $n=7$ then $a\in\{-2, 1\}$ with respective factorizations
\begin{align*}
& x^7+x^6+x^5-2=(x^2-x+1)(x^5+2x^4+2x^3-2x-2),\\
& x^7+x^6+x^5+1=(x+1)(x^2+1)(x^4-x+1).
\end{align*}
\item* If $n=8$ then
\begin{equation*}
a\in
\left\{ \frac{2}{81},
\frac{419928937515451125000}{53933980683177204481},
-\frac{13149874643832399539673}{1262324516855259464728576}\right\}.
\end{equation*}
with respective factors $x^2+x+\frac{1}{3}$, $x^2+\frac{90}{71} x+\frac{174150}{85697}$, and
$x^2+\frac{323}{728} x+\frac{263891}{1059968}$.

\end{enumerate}
\end{thmast}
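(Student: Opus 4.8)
The plan is to exploit the weighted homogeneity of the polynomials $A_j(p,q)$ from Lemma \ref{lem1} to collapse the two-variable divisibility condition into a one-variable hyperelliptic problem, and then to analyse the resulting genus $2$ curves in the specific cases $m=1$, $n=7,8$.

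First I would invoke Corollary \ref{cor1} with the exponents $n, n-m, n-2m$: divisibility of $f(a,x)$ by $x^2+px+q$ is equivalent to
\begin{equation*}
A_n(p,q)+A_{n-m}(p,q)+A_{n-2m}(p,q)=0,\qquad a=-\bigl(B_n+B_{n-m}+B_{n-2m}\bigr)(p,q).
\end{equation*}
The engine of the reduction is that $A_j$ is weighted homogeneous: giving $p$ weight $1$ and $q$ weight $2$, one has $A_j(\lambda p,\lambda^2 q)=\lambda^{\,j-1}A_j(p,q)$. This is immediate from the Binet formula (\ref{Binet}), since under $(p,q)\mapsto(\lambda p,\lambda^2 q)$ both $\sqrt{p^2-4q}$ and the two roots scale by $\lambda$; alternatively it follows by a one-line induction on the recurrence of Lemma \ref{lem1}. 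Assuming $p\neq 0$ and setting $t=q/p^2$, so that $q=tp^2$, homogeneity gives $A_j(p,q)=p^{\,j-1}A_j(1,t)$. Substituting into the vanishing condition and cancelling the common factor $p^{\,n-2m-1}$ (legitimate since $n>2m$) leaves
\begin{equation*}
A_n(1,t)\,(p^{m})^2+A_{n-m}(1,t)\,p^{m}+A_{n-2m}(1,t)=0.
\end{equation*}
Viewing this as a quadratic in the unknown $p^m$, rational solvability forces its discriminant to be a square, which is exactly the asserted condition $A_{n-m}(1,t)^2-4A_{n-2m}(1,t)A_n(1,t)=\square$. Thus, away from the locus $p=0$, the divisibility problem for fixed $(n,m)$ becomes the determination of rational points on the hyperelliptic curve $\mathcal{H}:\,D^2=A_{n-m}(1,t)^2-4A_{n-2m}(1,t)A_n(1,t)$.

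The degenerate case $p=0$ would be handled directly. The candidate factor is then $x^2+q$, and $a\neq 0$ forces $q\neq 0$; evaluating the vanishing condition with $A_j(0,q)=0$ for even $j$ and $A_j(0,q)=(-q)^{(j-1)/2}$ for odd $j$ pins $q$ down. For $m=1$ this gives $q=1$ (hence the factor $x^2+1$) when $n$ is odd and no solution when $n$ is even, with the corresponding $a$ read off from the $B$-formula. Since $p=0$ corresponds to $t=q/p^2\to\infty$, this case is really the point at infinity of $\mathcal{H}$, and is absorbed once the full rational point set including points at infinity is listed. In the converse direction each rational point $(t,D)$ with $A_n(1,t)\neq 0$ yields $p^m$ by the quadratic formula, and for $m=1$ this recovers $p$ directly, then $q=tp^2$ and finally $a$ from Corollary \ref{cor1}.

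For the explicit statements I would run the recurrence of Lemma \ref{lem1} at $p=1$ to obtain the $A_j(1,t)$ as polynomials in $t$ and form the discriminant. For $n=7$ this produces the quintic model $D^2=4t^5-27t^4+72t^3-66t^2+24t-3$, and for $n=8$ a sextic model, each defining a curve of genus $2$. The listed values of $a$ then arise by pushing the rational points back through the correspondence (for example $t=1$ gives $(p,q)=(-1,1)$ and $a=-2$ for $n=7$, and $t=\tfrac13$ gives $(p,q)=(1,\tfrac13)$ and $a=\tfrac{2}{81}$ for $n=8$), supplemented by the $p=0$ point giving the factor $x^2+1$ and $a=1$ when $n=7$; the factorizations are confirmed by direct division. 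The hard part, exactly as in the genus $2$ cases of Theorem \ref{thm:n=6}, is to prove completeness of the list of rational points. For $n=7$ the Jacobian of $\mathcal{H}$ has rank at most $1$, so Chabauty's method as implemented in Magma \cite{Mag} applies and the result is unconditional; for $n=8$ the Jacobian has higher rank, Chabauty is unavailable, and I could only exhibit the points found by a search to height $10^6$ without proving there are no others, which is why this case carries an asterisk.
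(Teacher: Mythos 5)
Your proposal is correct and follows essentially the same route as the paper: the weighted homogeneity $A_j(p,tp^2)=p^{j-1}A_j(1,t)$, the reduction to a quadratic in $p^m$ whose discriminant must be a square, the resulting genus $2$ curves with Chabauty succeeding for $n=7$ (rank $1$ Jacobian) and failing for $n=8$ (higher rank, hence the asterisk). Your explicit treatment of the degenerate locus $p=0$ (recovering $q=1$, the factor $x^2+1$ and $a=1$ for $n=7$) is in fact slightly more careful than the paper, which absorbs this case silently into ``the infinite point.''
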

\begin{proof}
It is clear that for any given pair $p, q$ of rational numbers one
can find a rational number $t$ such that $q=tp^2$. Now,
recall the formula for $A_{n}(p,q)$:
\begin{equation*}
A_{n}(p,q)=\frac{1}{\sqrt{p^2-4q}}\left(\left(\frac{\sqrt{p^2-4q}-p}{2}\right)^{n}-\left(\frac{-\sqrt{p^2-4q}-p}{2}\right)^{n}\right),
\end{equation*}
and note that $A_{n}(p,q)=A_{n}(p,p^2t)=p^{n-1}A_{n}(1,t)$. Thus we have that
\begin{equation*}
A_{n}(p,q)+A_{n-m}(p,q)+A_{n-2m}(p,q)=p^{n-2m-1}(p^{2m}A_{n}(1,t)+p^{m}A_{n-1}(1,t)+A_{n-2}(1,t)).
\end{equation*}
In order to find all rational solutions of the equation
$A_{n}(p,q)+A_{n-m}(p,q)+A_{n-2m}(p,q)=0$ it is enough now to characterize the set of rational
points on the hyperelliptic curve
\begin{equation*}
H_{n,n-m,n-2m}:\;s^2=A_{n-m}(1,t)^{2}-4A_{n-2m}(1,t)A_{n}(1,t),
\end{equation*}
where $s=\pm(2A_{n}(1,t)p+A_{n-m}(1,t))$.

Suppose $m=1$, and consider those $n$ such that the curve $H_{n,n-1,n-2}$ has genus two;
it is straightforward to check that $n=7$ and $n=8$ are the only cases.

Case $n=7$. Here, the problem of divisibility of $f(a,x)$ by $x^2+px+q$ is equivalent
to the study of rational points on the genus 2 hyperelliptic curve:
\begin{equation*}
H_{7,6,5}:\;s^2=4t^5-27t^4+72t^3-66t^2+24t-3,
\end{equation*}
where $s=\pm(2(t^3-6t^2+5t-1)p+(t-1)(3t-1))$.
The rank of the Jacobian variety is 1, and Magma's Chabauty routines determine
that the only finite rational points on $H_{7,6,5}$ are $(t,\pm s)=(1,2)$.
These return $a=-2$; the infinite point returns $a=1$.

\bigskip

Case $n=8$. Here, the problem of divisibility of
$f(a,x)$ by $x^2+px+q$ is equivalent to the study of the rational points on the
genus 2 hyperelliptic curve:
\begin{equation*}
H_{8,7,6}:\;s^2=t^6+36t^5-138t^4+186t^3-111t^2+30t-3,
\end{equation*}
where $s=\pm2((2t-1)(2t^2-4t+1)p-t^3+6t^2-5t+1)$.
There are points
at $t=\frac{1}{3}, \frac{1}{2}, 1, \frac{43}{34}$.
The rank of the Jacobian variety is 2, so standard Chabauty arguments do not apply.
The sextic factors over $\Q(\sqrt{3})$:
\[ s^2 = (t^3+18 t^2-15 t+3)^2 - 3 (12 t^2-10 t+2)^2, \]
so that over the sextic field defined by a root of the right hand side of $H_{8,7,6}$,
we obtain a factorization of type $d_1(t) d_2(t) d_3(t)= \square$, with $\op{deg}d_i=i$.
One method for resolution reduces to considering elliptic quartics of type $d_1(t) d_3(t)=\delta \square$,
for a finite number of $\delta$ over a sextic field. But again, we were unsuccessful
in completing this approach.
Consequently, we are unable to determine explicitly all the rational points on
$H_{8,7,6}$.

\end{proof}

Although we have not proved the following, it seems plausible:
\begin{conj}
Let $m,n\in\N_{+}$ with $n>2m$ and define the polynomial
\begin{equation*}
F_{m,n}(t)=A_{n-m}(1,t)^{2}-4A_{n-2m}(1,t)A_{n}(1,t).
\end{equation*}
Then $F_{m,n}$ has no multiple roots.
\end{conj}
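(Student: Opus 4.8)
The plan is to trivialize the three-term structure using the Binet formula \eqref{Binet}. Putting $\cos\theta=-1/(2\sqrt{t})$, that formula reads $A_j(1,t)=t^{(j-1)/2}\sin(j\theta)/\sin\theta$, and a short computation with $2\sin A\sin B=\cos(A-B)-\cos(A+B)$ collapses the cross terms of $F_{m,n}$ to
\[
F_{m,n}(t)=\frac{t^{n-m-1}}{\sin^2\theta}\Big(\tfrac12+\tfrac32\cos 2(n-m)\theta-2\cos 2m\theta\Big).
\]
Let $V_k$ be the Chebyshev--Dickson polynomials fixed by $V_k(z+z^{-1})=z^k+z^{-k}$ (so $V_0=2$, $V_1=\zeta$, $V_k=\zeta V_{k-1}-V_{k-2}$), and set $\zeta=z+z^{-1}=1/t-2$ with $z=e^{2i\theta}$. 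Then the bracket above is $\tfrac14 R(\zeta)$ and the identity becomes
\[
F_{m,n}(t)=\frac{t^{n-m}}{4t-1}\,R\!\left(\frac1t-2\right),\qquad R(\zeta):=3V_{n-m}(\zeta)-4V_m(\zeta)+2,
\]
which one may also verify directly from Lemma~\ref{lem1}. Because $\zeta=1/t-2$ is a Möbius substitution it preserves root multiplicities, so $F_{m,n}$ is separable precisely when $R$ has no repeated root away from $\zeta=-2$ (the value corresponding to $t=\infty$). The entire question is thereby reduced to the separability of the single polynomial $R$.

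First I would remove the forced factor. Since $V_k(2)=2$ and $V_k'(2)=k^2$, one has $R(2)=0$ for every $n,m$, while $R'(2)=3(n-m)^2-4m^2\neq0$ (the equation $3(n-m)^2=4m^2$ has no positive integer solution); hence $\zeta=2$ is a simple root, accounting for the harmless factor $4t-1$. For a hypothetical repeated root $\zeta_0=z_0+z_0^{-1}$ with $z_0\neq\pm1$, I would combine $R(\zeta_0)=R'(\zeta_0)=0$ with the first integral $(4-\zeta^2)V_k'(\zeta)^2+k^2V_k(\zeta)^2=4k^2$. Eliminating the derivatives yields, with $N=n-m$, the quadratic $(N^2-m^2)b^2-N^2b+(4m^2-2N^2)=0$ for $b=V_m(\zeta_0)$; its discriminant is the perfect square $(3N^2-4m^2)^2$, so $b\in\{2,b_*\}$ with $b_*=(2m^2-N^2)/(N^2-m^2)$. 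The value $b=2$ forces $z_0^m=z_0^N=1$, i.e. $z_0^{\gcd(n,m)}=1$, and when $\gcd(n,m)\geq 3$ this produces a genuine double root of $R$ at $\zeta_0=2\cos(2\pi/\gcd(n,m))\neq\pm2$. Thus the statement as written is \emph{false} for non-primitive triples: for $(n,m)=(9,3)$ one computes $F_{3,9}(t)=(t-1)^2(4t^3-27t^2+18t-3)$. The conjecture should therefore be restricted to the primitive case $\gcd(n,m)=1$, consistent with the rest of the paper.

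Assuming $\gcd(n,m)=1$, the branch $b=2$ only returns $\zeta_0=2$, so it remains to kill the branch $b=b_*$. Here a repeated root would require a single $\zeta_0$ with $V_m(\zeta_0)=b_*$ \emph{and} $V_{n-m}(\zeta_0)=a_*:=(4b_*-2)/3$, together with the correct sign in the Wronskian relation $3V_{n-m}'(\zeta_0)=4V_m'(\zeta_0)$; squaring is not enough, since already for $(n,m)=(3,1)$ both trace equations hold at $\zeta_0=-2/3$ yet $F_{1,3}=4t-3$ is linear because the sign is opposite. Writing $P=z_0^m$ and $Q=z_0^{\,n-m}$, the trace equations pin $P,Q$ up to inversion via $P+P^{-1}=b_*$ and $Q+Q^{-1}=a_*$, while $Q^m=P^{\,n-m}$ (both equal $z_0^{m(n-m)}$) is a binding multiplicative relation. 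I would try to prove
\[
\op{Res}_{\zeta}\big(V_m(\zeta)-b_*,\;V_{n-m}(\zeta)-a_*\big)\neq0\quad\text{for all coprime }n-m>m,
\]
equivalently the multiplicative independence $Q^m\neq P^{\,n-m}$ for the algebraic numbers carrying these prescribed Dickson traces. \emph{This uniform non-vanishing is the main obstacle.} The resultant of two Dickson polynomials of unrelated degrees does not factor in any way I can control as $n,m$ vary, and I see no height or ramification argument that excludes the rare numerical coincidence; this is exactly where a uniform proof appears to need a new idea, which is presumably why the assertion is only conjectured.
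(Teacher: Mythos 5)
There is no proof in the paper to compare against: the statement is explicitly offered as a conjecture (``Although we have not proved the following\dots''). Your proposal is likewise not a proof, and you say so yourself: after reducing the problem to the separability of $R(\zeta)=3V_{n-m}(\zeta)-4V_m(\zeta)+2$ under $\zeta=1/t-2$, the coprime case comes down to a non-vanishing of a resultant of two Dickson polynomials with prescribed values (equivalently, ruling out the branch $V_m(\zeta_0)=(2m^2-N^2)/(N^2-m^2)$ together with the correct sign in $3V_{n-m}'(\zeta_0)=4V_m'(\zeta_0)$), and you supply no argument for this. That is a genuine, acknowledged gap, and it is exactly where the difficulty of the conjecture sits; the reductions preceding it (the Binet/Chebyshev rewriting, the first integral $(4-\zeta^2)V_k'^2+k^2V_k^2=4k^2$, the discriminant computation) are correct but standard bookkeeping.

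What your analysis does contribute, and what should not be buried inside a failed proof, is a counterexample to the conjecture as stated. I verified it directly from the recurrence of Lemma \ref{lem1}: with $p=1$, $q=t$ one has $A_3(1,t)=1-t$, $A_6(1,t)=-3t^2+4t-1$, $A_9(1,t)=t^4-10t^3+15t^2-7t+1$, and hence
\[
F_{3,9}(t)=A_{6}(1,t)^2-4A_{3}(1,t)A_{9}(1,t)=4t^5-35t^4+76t^3-66t^2+24t-3=(t-1)^2\bigl(4t^3-27t^2+18t-3\bigr),
\]
a double root at $t=1$ with $n=9>2m=6$. Your structural explanation is also correct: whenever $g=\gcd(m,n)\geq 3$ and $z_0^g=1$ with $z_0\neq\pm1$, both $V_m$ and $V_{n-m}$ take the value $2$ and have vanishing derivative at $\zeta_0=z_0+z_0^{-1}$, so $R$ has a forced double root there (for $(9,3)$ this is $\zeta_0=-1$, i.e.\ $t=1$). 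So the conjecture is false as written and must at least be restricted to $\gcd(m,n)=1$; under that restriction your argument disposes of the branch $V_m(\zeta_0)=2$ but leaves the other branch, and hence the corrected conjecture, open.
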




\begin{thm}
Let $f(a,x)=x^8+x^m+x^k+a$, with $k, m$ even. Suppose that
$f(a,x)$ is reducible and $x^4+x^{\frac{m}{2}}+x^{\frac{k}{2}}+a$ is
irreducible. We have the following results:
\begin{enumerate}
\item If $(m,k)=(4,2)$, then $a\in\left\{\frac{1}{4},
\frac{625}{4}\right\}$ with respective factorizations
\begin{align*}
& x^8+x^4+x^2+\frac{1}{4}=\left( x^4-2x^3+2x^2-x+\frac{1}{2}\right)\left(x^4+2x^3+2x^2+x+\frac{1}{2}\right),\\
& x^8+x^4+x^2+\frac{625}{4}=\left(x^4-2x^3+2x^2-7x+\frac{25}{2}\right)\left(x^4+2x^3+2x^2+7x+\frac{25}{2}\right).
\end{align*}

\item If $(m,k)=(6,2)$, then there does not exist $a\in\Q$ satisfying the required properties.
\item If $(m,k)=(6,4)$, then $a=4$ with the factorization
\begin{equation*}
x^8+x^6+x^4+4=(x^4-x^3+x^2-2x+2)(x^4+x^3+x^2+2x+2).
\end{equation*}
\end{enumerate}
\end{thm}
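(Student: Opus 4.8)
The plan is to exploit that $f(a,x)=g(x^2)$, where $g(y)=y^4+y^{m/2}+y^{k/2}+a$ is, by hypothesis, an irreducible quartic. First I would pin down the shape of any factorization of $f$. Since $g$ is irreducible of degree $4$, the Galois group acts transitively on its roots $\beta_1,\dots,\beta_4$; the eight roots of $g(x^2)$ are $\pm\gamma_i$ with $\gamma_i^2=\beta_i$, and the involution $x\mapsto-x$ (which fixes $g(x^2)$) commutes with the entire Galois action. Hence every Galois orbit of roots surjects onto $\{\beta_1,\dots,\beta_4\}$ under squaring and is either stable or swapped by $x\mapsto-x$, so its size is $4$ or $8$. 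Consequently a reducible $f(a,x)$ must split as a product of two monic quartics, and since $f(a,x)=f(a,-x)$, unique factorization forces one quartic to be $A(-x)$ when the other is $A(x)$ (the alternative, that a quartic factor is itself even, would force $\tilde A(y)\mid g(y)$ for a quadratic $\tilde A$, contradicting irreducibility of $g$). Thus $f(a,x)=A(x)A(-x)$, which is exactly the content of Lemma~29 of \cite{Sch1} in degree $4$.

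Next I would compare coefficients. Writing $A(x)=x^4+ux^3+vx^2+wx+z$, one has
\[
A(x)A(-x)=(x^4+vx^2+z)^2-(ux^3+wx)^2,
\]
so that
\[
f(a,x)=x^8+(2v-u^2)x^6+(v^2+2z-2uw)x^4+(2vz-w^2)x^2+z^2 .
\]
Matching the pattern prescribed by $(m,k)\in\{(4,2),(6,2),(6,4)\}$ yields in each case a near-triangular system in $u,v,w,z$ with $a=z^2$. One solves at once for $v$ (from the $x^6$ coefficient) and then for $z$, reducing to a single quadratic in $w$; its discriminant must be a square, producing a curve $s^2=\Phi(u)$ with $\Phi$ an \emph{even} sextic. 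Concretely $(4,2)$ gives $s^2=2(u^6+4u^2-8)$, $(6,4)$ gives $s^2=2(u^2+1)(u^4+3)$, and $(6,2)$ gives $2s^2=(u^2+1)^2(u^2-1)-8$.

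The crucial simplification is the evenness of $\Phi$: the substitution $U=u^2$ realizes each genus $2$ curve as a double cover of an elliptic curve $E$, namely $s^2=2(U^3+4U-8)$ for $(4,2)$, $s^2=2(U+1)(U^2+3)$ for $(6,4)$, and $2s^2=U^3+U^2-U-9$ for $(6,2)$. I would compute $E(\Q)$ in Magma~\cite{Mag}; I expect the relevant quotient to have rank $0$ (or, failing that, rank $1$ so that Chabauty applies), so that $E(\Q)$ is finite and completely listed. Imposing that $U=u^2$ be a nonnegative perfect square (and $z\neq0$, to avoid the excluded $a=0$) leaves only finitely many $(u,w,z)$: for $(4,2)$ one finds $U=4$, hence $u=2$ and the two roots $w\in\{1,7\}$, giving $a\in\{1/4,625/4\}$; for $(6,4)$ one finds $U=1$, $u=1$, $w=2$, giving $a=4$; and for $(6,2)$ no admissible square $U$ survives, so no such $a$ exists. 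For each surviving value I would then verify directly that $g(y)=y^4+y^{m/2}+y^{k/2}+a$ is irreducible, confirming the hypotheses, and record the factorization $f(a,x)=A(x)A(-x)$.

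The main obstacle is the rigorous determination of all rational points. Everything rests on the \emph{unconditional} computation of $E(\Q)$ (rank, torsion, and saturation) and, for $(6,2)$, on certifying that $E$ carries no rational point whose $U$-coordinate is a positive square. It is precisely the evenness of $\Phi$, collapsing the genus $2$ problem to a low-rank elliptic curve through $U=u^2$, that makes these assertions provable: a rational point on the genus $2$ curve maps to a point of $E$ with $U$ a square, so a single rank-$0$ quotient already bounds everything. Had that quotient instead had larger rank, one would be thrown back onto (elliptic) Chabauty on the genus $2$ curve, with the attendant uncertainty and asterisks that afflict the sextic cases of the preceding section.
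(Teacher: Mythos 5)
Your setup is sound and, for two of the three cases, coincides with the paper's argument. The reduction to $f(a,x)=A(x)A(-x)$ (which the paper simply cites from Lemma 29 of Schinzel) is correctly justified by your Galois-orbit argument, the coefficient systems and the resulting curves agree with the paper's ($v^2=2(p^6+4p^2-8)$, $v^2=2(p^6+p^4-p^2-9)$, $v^2=2(p^2+1)(p^4+3)$ up to rescaling), and for $(m,k)=(6,2)$ and $(6,4)$ the paper does exactly what you propose: substitute $X=p^2$ and land on the rank-$0$ elliptic curves $Y^2=2(X^3+X^2-X-9)$ and $Y^2=2(X+1)(X^2+3)$.

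The genuine gap is in case $(m,k)=(4,2)$. The elliptic quotient you rely on, $s^2=2(U^3+4U-8)$ with $U=u^2$, does \emph{not} have rank $0$: it carries the rational point $(U,s)=(2,4)$, which one checks (e.g.\ via Nagell--Lutz on the integral model $Y^2=X^3+16X-64$, where $4P$ already has non-integral coordinates) to have infinite order. So $E(\Q)$ is infinite, and ``list the points and keep those with $U$ a square'' is not a finite procedure. Your fallback --- ``rank $1$ so that Chabauty applies'' --- does not rescue this: Chabauty is a statement about curves of genus $\geq 2$ with Jacobian rank less than the genus, not a way to enumerate the infinitely many points of a rank-$1$ elliptic curve; and the Jacobian of the genus-$2$ curve here has rank $2$ (it is isogenous to the product of the two elliptic quotients coming from $U=u^2$ and $U=1/u^2$, both of positive rank), so classical Chabauty on the genus-$2$ curve fails as well. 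This is precisely why the paper abandons the elliptic quotients in this case and instead factors $v^2=2(p^2-\theta)(p^4+\theta p^2+\theta^2+4)$ over the cubic field $\Q(\theta)$ with $\theta^3+4\theta-8=0$, disposing of one case by local unsolvability and treating the other by elliptic Chabauty over that field. Your proposal would need to be replaced by an argument of that kind to establish that $a\in\{\frac14,\frac{625}{4}\}$ is the complete list; as written, it only recovers these values as the ones found by search.
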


\begin{proof}
Case $(m,k)=(4,2)$. By Lemma 29 in \cite{Sch1} we know that $f(a,x)=h(x)h(-x)$, where, say,
\begin{equation*}
h(x)=x^4+px^3+qx^2+rx+s.
\end{equation*}
Comparing now the coefficients in the equality $f(a,x)=h(x)h(-x)$ we
get the following system of equations
\begin{equation*}
  p^2-2q=0,   \; 1-q^2+2pr-2s=0, \; 1+r^2-2qs=0,\; a-s^2=0.
\end{equation*}
Solving the first, the second and the fourth equation with the
respect to $a, q, r$ we get
\begin{equation*}
q=\frac{p^2}{2},\quad r=\frac{-4+p^4+8s}{8p},\quad a=s^2.
\end{equation*}
We are thus left with finding rational points on the genus two curve
given by
\begin{equation*}
H_{8,4,2}:\;v^2=2(p^6+4p^2-8),
\end{equation*}
where $v=\pm\left(\frac{8s-3p^4-4}{2p}\right)$.
The rank of the Jacobian variety is 2, so standard Chabauty techniques are unavailable
to us.  However, we can work over the cubic number field $L$ defined by $\theta^3+4\theta-8=0$,
when the equation of the curve takes the form
\[ v^2 = 2(p^2-\theta)(p^4+\theta p^2+\theta^2+4). \]
Factoring over $L$ results in two cases to consider, one of which is locally unsolvable,
and the other of which is amenable to an argument using elliptic Chabauty techniques.
The complete set of rational points is $(\pm p,\pm v)=(2,12)$, which
return $a=\frac{1}{4}$, $\frac{625}{4}$.

\bigskip

Case $(m,k)=(6,2)$. From Lemma 29 in \cite{Sch1} we know that
$f(a,x)=h(x)h(-x)$, where as before $h(x)=x^4+px^3+qx^2+rx+s.$
Comparing coefficients in the equality $f(a,x)=h(x)h(-x)$ we
get the following system of equations
\begin{equation*}
  1+p^2-2q=0,   \; -q^2+2pr-2s=0, \; 1+r^2-2qs=0,\; a-s^2=0.
\end{equation*}
Solving the first, the second and the fourth equation with the
respect to $a, q, r$ we get
\begin{equation*}
q=\frac{p^2+1}{2},\quad r=\frac{1+2p^2+p^4+8s}{8p},\quad a=s^2.
\end{equation*}
We are thus left with finding the rational points on the genus two curve
\begin{equation*}
H_{8,6,2}:\;v^2=2(p^6+p^4-p^2-9),
\end{equation*}
where $v=\pm\left(\frac{8s-3p^4-2p^2+1}{2p}\right)$.
There is an obvious map with $p^2=X$ to the elliptic curve
\[ E_{8,6,2}:\; Y^2=2(X^3+X^2-X-9), \]
which has rank 0 and trivial torsion. Thus there are no rational points on
$H_{8,6,2}$

\bigskip

Case $(m,k)=(6,4)$. As above, $f(a,x)=h(x)h(-x)$, with $h(x)=x^4+px^3+qx^2+rx+s.$
Comparing coefficients in the equality $f(a,x)=h(x)h(-x)$ we
get the following system of equations
\begin{equation*}
  1+p^2-2q=0,   \; 1-q^2+2pr-2s=0, \; r^2-2qs=0,\; a-s^2=0.
\end{equation*}
Solving the first, the second and the fourth equation with the
respect to $a,q,r$ we get
\begin{equation*}
q=\frac{p^2+1}{2},\quad r=\frac{-3+2p^2+p^4+8s}{8p},\quad a=s^2.
\end{equation*}
We are thus left with finding rational points on the genus two curve
given by
\begin{equation*}
H_{8,6,4}:\;v^2=2(p^2+1)(p^4+3).
\end{equation*}
where $v=\pm\left(\frac{8s-3p^4-2p^2-3}{2p}\right)$.
There is an obvious map with $p^2=X$ to the elliptic curve
\[ E_{8,6,4}:\; Y^2=2(X+1)(X^2+3), \]
which has rank 0 and torsion group $\{\cal{O},(1,1),(1,-1),(0,0)\}$.
The points $(1,\pm 1)$ return $a=4$ and $a=0$.
\end{proof}

\begin{thm}
Let $f(a,x)=x^{10}+x^{6}+x^{2}+a$. Then $f(a,x)$ is reducible if and
only if $a=-u^5-u^3-u$ for some $u\in\Q$.
\end{thm}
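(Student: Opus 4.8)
The plan is to exploit that $f(a,x)$ is even, so $f(a,x)=g(x^2)$ with $g(y)=y^5+y^3+y+a=f_{5,3,1}(a,y)$, the very quintic studied in Theorem~\ref{thm:n=5}. Reducibility of $f$ is then governed by the factorization of $g$ over $\Q$ together with the behaviour of $g$ under the substitution $y\mapsto x^2$. The easy implication is immediate: if $a=-u^5-u^3-u$ for some $u\in\Q$, then $g(u)=0$, so $x^2-u$ divides $g(x^2)=f(a,x)$, and since $\deg(x^2-u)=2<10$ the quadrinomial $f$ is reducible (the degenerate value $a=0$ corresponds to $u=0$).

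For the converse I would argue through the parity of the irreducible factors. Assume $a\neq 0$, so $f(a,0)=a\neq 0$ and no irreducible factor is divisible by $x$. If $h$ is an irreducible factor of $f$, then $h(-x)$ is one as well, because $f(a,-x)=f(a,x)$. Either $h(-x)=\pm h(x)$, which forces $h$ to be even (the odd alternative would give $h(0)=0$), whence $h(x)=H(x^2)$ with $H\mid g$; or $h(x)$ and $h(-x)$ are non-associate, in which case the even polynomial $h(x)h(-x)$ equals $P(x^2)$ for some $P\mid g$. Writing $g(x^2)=P(x^2)Q(x)$ and noting that $Q$ is then forced to be even shows in every case that a proper factorization of $g(x^2)$ descends to a factor of $g$; this factor is proper, of degree between $1$ and $4$, unless $\deg h=5$. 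Hence $f$ reducible forces exactly one of the alternatives: (i) $g$ is reducible over $\Q$; or (ii) $g$ is irreducible but $g(x^2)$ splits into two quintics, which by Capelli's theorem (the case $n=2$) occurs precisely when a root $\alpha$ of $g$ is a square in $\Q(\alpha)$.

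It remains to prove two sub-claims. For [B], if $g$ is irreducible then no root is a square in $\Q(\alpha)$: here I would begin from the norm relation $N_{\Q(\alpha)/\Q}(\alpha)=\prod_i\alpha_i=-a$, so that $\alpha=\beta^2$ would force $-a$ to be a rational square, and then eliminate the surviving cases by a local (or discriminant) argument. For [A], if $g$ is reducible then it has a rational root: since $\deg g=5$, the only reducibility type with no linear factor is (irreducible quadratic)$\times$(irreducible cubic), so [A] reduces to excluding a genuine quadratic factor $x^2+px+q$ having an irreducible cubic cofactor.

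The main obstacle is precisely [A]. By Corollary~\ref{cor1}, quadratic factors $x^2+px+q$ of $g=f_{5,3,1}(a,y)$ are controlled by the curve $C_{5,3,1}$, which in the proof of Theorem~\ref{thm:n=5} is shown to be birationally an elliptic curve $E_{5,3,1}$ of positive rank; so there are infinitely many $a$ for which $g$ acquires a quadratic factor. To complete [A] one must therefore show that every such rational point forces $g$ to possess, in addition, a rational root, equivalently that the corresponding $a$ is again of the form $-u^5-u^3-u$. Reconciling this infinite family of quadratic-factor values with the rational-root locus is where essentially all of the difficulty concentrates, and I would expect the crux of any complete proof to lie exactly here; it is the step I would verify most carefully before trusting the stated equivalence.
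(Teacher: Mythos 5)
Your reduction to the quintic $g(y)=y^5+y^3+y+a=f_{5,3,1}(a,y)$ and the split into (i) $g$ reducible over $\Q$ and (ii) $g$ irreducible but $g(x^2)$ splitting into two quintics is sound, and your case (ii) is essentially what the paper actually proves: it invokes Lemma 29 of Schinzel to write $f(a,x)=-h(x)h(-x)$ with $h$ quintic, compares coefficients, and lands on a genus-one curve $C_{10,6,2}$ whose cubic model has rank $0$; the finitely many rational points produce no admissible $a$ because no $P$-coordinate is a fourth power. Your proposed norm/Capelli argument for [B] would be an alternative route to that computation.

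The decisive point, however, is exactly the one you flag as the ``main obstacle,'' sub-claim [A], and it cannot be repaired: [A] is false, and with it the stated theorem. The paper's own Theorem \ref{thm:n=5}, case $(m,k)=(3,1)$, exhibits $a=-12$ with $y^5+y^3+y-12=(y^2-y+3)(y^3+y^2-y-4)$, whence
\begin{equation*}
x^{10}+x^6+x^2-12=(x^4-x^2+3)(x^6+x^4-x^2-4)
\end{equation*}
is reducible; yet $u^5+u^3+u=12$ has no rational solution (a rational root would be an integer divisor of $12$, and $u=1$ gives $3$, $u=2$ gives $42$, while negative $u$ give negative values), so $-12$ is not of the form $-u^5-u^3-u$. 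Since $E_{5,3,1}$ has positive rank, infinitely many such $a$ arise. The paper's proof contains precisely the gap you isolated: the assertion that ``any possible factor is of degree $\geq 3$'' followed by the application of Lemma 29 silently assumes that $g$ admits no quadratic-times-cubic factorization, i.e., that reducibility of $g$ forces a rational root --- which is your [A]. So you have not supplied a proof, but you have correctly located the step on which everything hinges, and at that step the claimed equivalence actually breaks down; a corrected statement would have to add the values of $a$ for which $f_{5,3,1}(a,y)$ has an irreducible quadratic factor.
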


\begin{proof}
It is clear that if $a=-u^5-u^3-u$ for some $u\in\Q$ then the
polynomial $f(a,x)$ is divisible by $x^2-u$. Now suppose that $a$
is not of the form $-u^5-u^3-u$. Then any possible factor is of
degree $\geq 3$ and from Lemma 29 in \cite{Sch1} we know that
$f(a,x)=-h(x)h(-x)$, where
\begin{equation*}
h(x)=x^5+px^4+qx^3+rx^2+sx+t.
\end{equation*}
Comparing now the coefficients in the equality $f(a,x)=-h(x)h(-x)$
we get the following system of equations
\begin{equation*}
  p^2-2q=0,   \; 1-q^2+2pr-2s=0, \; r^2-2qs+2pt=0,\; a+t^2=0, \; 1-s^2+2rt=0.
\end{equation*}
Solving the first four equations with respect to $q, r, t, a$ we get
\begin{equation*}
q=\frac{p^2}{2},\; t=\frac{p^2s-r^2}{2p},\; r=\frac{p^4-4+8s}{8p},\; a=-t^2.
\end{equation*}
On substitution, we are left with finding the rational points on:
\begin{equation*}
C_{10,6,2}:\; P^3-4(10s+3)P^2+16(12s^2+4s-29)P+64(2s-1)^3 = 0,
\end{equation*}
where $P=p^4$. The curve $C_{10,6,2}$ is of genus 1, and taking $(P,s)=(-4,1)$
as zero point, a cubic model is
\[ E_{10,6,2}:\;Y^2=X^3+X^2-24X+36.\]
This curve has rank 0 and torsion group $\Z/2\Z \times \Z/4\Z$,
with the eight points given by $\{\cal{O},(6,\pm 12),(2,0),(-6,0),(0,\pm 6),(3,0) \}.$
Thus the complete set of rational points on $C_{10,6,2}$ is
\begin{equation*}
(P,s) \in \left\{ (12/5,7/5),(-4,3),(-4,1),(-36,-1),(-4,1),(0,1/2),(12,-1) \right\}.
\end{equation*}
None of the $P$-coordinates is a fourth power, which finishes the proof.
\end{proof}

\section{Some quadratic polynomials which divide infinitely many quadrinomials $f_{n,m,k}(a,x)$}\label{section6}

It is an interesting and highly non-trivial problem as to whether a given polynomial
$h\in\Q[x]$ divides infinitely many quadrinomials. This problem was addressed by L.~Hajdu and
R.~Tijdeman in \cite{HajTij}. They prove that the polynomial $h(x)$ divides infinitely many
quadrinomials if either $h$ divides two different quadrinomials with the same sequence of exponents,
or $h$ lies in the set
\begin{equation*}
\{h\in\Q[x]:\;\exists q\in \Q[x], r\in\N_{+}: \op{deg}q \leq 3\;\mbox{and}\;h(x)|q(x^{r})\;\mbox{over}\;\Q\;\}.
\end{equation*}
In this section we are interested in finding polynomials $h\in\Z[x]$ such that $h$ divides
infinitely many quadrinomials $f_{n,m,k}(a,x)$ with $a\in\Z$ and $n,m,k\in\N$ satisfying
$n>m>k>0$. First, note that the theorem of Hajdu and Tijdeman is of little use here
because our quadrinomials are of very special form. Moreover, we shall concentrate
on polynomials $h$ of degree 2. Of course each $h$ of degree 2 divides infinitely many
(general) quadrinomials but it is unclear whether there exists even one such $h$
that divides infinitely many quadrinomials $f_{n,m,k}(a,x)$. We are interested only
in polynomials that do not divide $x^{p}\pm 1$ for any $p\in \N_{+}$. It is immediate
that if $k\equiv m\equiv n \equiv 0\bmod{p}$ then $f_{n,m,k}(-3,x)$ is divisible by $x^p-1$,
and if $k\equiv m\equiv n \equiv p\bmod{2p}$ then $f_{n,m,k}(3,x)$ is divisible by $x^p+1$.
During computer experiments, we observed that if $h(x)\in S$, where
\begin{equation*}
S=\{x^2-2x+2, x^2+2x+2, x^2+3x+3\},
\end{equation*}
then $h(x)$ divides infinitely many quadrinomials $f_{n,m,k}(a,x)$ for certain values of $a$
and specific sequences of exponents $n,m,k$. Essentially, the aim of this section is to
give a precise description of the sequences of exponents $(n,m,k)$ such that
$f_{n,m,k}(a,x)\equiv 0\bmod{h(x)}$ for $h\in S$. First, note that if
$h(x)=x^2+px+q\in\Z[x]$ divides infinitely many quadrinomials $f_{n,m,k}(a,x)$, this
immediately implies via Corollary \ref{cor1} that the (exponential) Diophantine equation
\begin{equation}\label{diveq}
A_{n}(p,q)+A_{m}(p,q)+A_{k}(p,q)=0,
\end{equation}
together with the condition $B_{n}(p,q)+B_{m}(p,q)+B_{k}(p,q)\neq 0$, has infinitely many
solutions in positive integers $n,m,k$ with $n>m>k$. We see that if $h(x)\in S$ and
$\theta_{1}, \theta_{2}$ satisfy $h(\theta_{i})=0$ then $\theta_{1}/\theta_{2}$ is a root
of unity. This is not a coincidence because if the quotient of the roots of the polynomial
$x^2+px+q=0$ is not a root of unity then the equation (\ref{diveq}) has only finitely many
solutions in positive integers $n,m,k$ with $n>m>k$. One can check that if $h\in\Z[x]$ is
a polynomial of degree 2 for which the quotient of roots is a root of unity and there is
a non-zero integer $a$ such that $h(x)|f_{n,m,k}(a,x)$, then $h\in S$. This property allows us to
compute $A_{n}(p,q)$ explicitly. We gather these computations in the following:

\begin{lem}\label{values}
We have the following equalities
\begin{itemize}
\item[(1)]  If $(p,q)=(-2,2)$ then
\begin{equation*}
A_{4n}(-2,2)=0,\; A_{4n+1}(-2,2)=(-1)^{n}2^{2n},\; A_{4n+2}(-2,2)=A_{4n+3}(-2,2)=(-1)^{n}2^{2n+1}.
\end{equation*}
\item[(2)] If $(p,q)=(2,2)$ then
\begin{equation*}
A_{4n}(2,2)=0,\; A_{4n+1}(2,2)=(-1)^{n}2^{2n},\; A_{4n+2}(2,2)=-A_{4n+3}(2,2)=(-1)^{n+1}2^{2n+1}.
\end{equation*}
\item[(3)] If $(p,q)=(3,3)$ then
\begin{equation*}
\begin{array}{ll}
A_{6n}(3,3)=0,                     & A_{6n+3}(3,3)=2(-1)^{n}3^{3n+1}, \\
A_{6n+1}(3,3)=(-1)^{n}3^{3n},      & A_{6n+4}(3,3)=(-1)^{n+1}3^{3n+2}, \\
A_{6n+2}(3,3)=(-1)^{n+1}3^{3n+1},  & A_{6n+5}(3,3)=(-1)^{n}3^{3n+2}.
\end{array}
\end{equation*}
\end{itemize}
\end{lem}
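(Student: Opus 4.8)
The plan is to exploit the linear recurrence $A_{n}(p,q)=-pA_{n-1}(p,q)-qA_{n-2}(p,q)$ from Lemma~\ref{lem1}, which for each of the three pairs $(p,q)\in\{(-2,2),(2,2),(3,3)\}$ has characteristic polynomial $x^2+px+q$ whose roots $\theta_1,\theta_2$ satisfy $\theta_1/\theta_2$ equal to a root of unity; this periodicity is precisely what forces the closed forms to be geometric in $|\theta_i|^2$ up to a cyclically periodic sign. I would first compute the two roots explicitly. For $(p,q)=(-2,2)$ the roots of $x^2-2x+2$ are $1\pm i=\sqrt{2}\,e^{\pm i\pi/4}$, so $\theta_1/\theta_2=e^{i\pi/2}=i$ has order $4$, explaining the period-$4$ structure. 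For $(p,q)=(2,2)$ the roots of $x^2+2x+2$ are $-1\pm i=\sqrt{2}\,e^{\pm 3i\pi/4}$, again of modulus $\sqrt 2$ with ratio a primitive fourth root of unity. For $(p,q)=(3,3)$ the roots of $x^2+3x+3$ are $\tfrac{-3\pm i\sqrt3}{2}=\sqrt3\,e^{\pm 5i\pi/6}$, of modulus $\sqrt 3$ and ratio a primitive sixth root of unity, explaining the period-$6$ pattern.

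The cleanest route is then to insert these polar forms into the Binet formula~\eqref{Binet}. Writing $\theta_{1,2}=R\,e^{\pm i\phi}$ with $R=\sqrt{q}$ (since $\theta_1\theta_2=q$) and $\cos\phi=-p/(2\sqrt q)$, the Binet formula collapses to
\begin{equation*}
A_{n}(p,q)=\frac{R^{n}\sin(n\phi)}{R\sin\phi}=q^{(n-1)/2}\,\frac{\sin(n\phi)}{\sin\phi}.
\end{equation*}
For $(p,q)=(\pm2,2)$ we have $\phi=\pi/4$ or $3\pi/4$ and $R=\sqrt2$, while for $(3,3)$ we have $\phi=5\pi/6$ and $R=\sqrt3$. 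The stated values now follow by evaluating $\sin(n\phi)/\sin\phi$ along the arithmetic progressions of $n$ modulo the period ($4$ or $6$) and simplifying the power of $R$: the vanishing cases ($A_{4n}$, $A_{6n}$) correspond exactly to $\sin(n\phi)=0$, and the nonzero cases give $\pm$ integer powers of $2$ or $3$ after absorbing $R^{n-1}$ against the half-integer exponent.

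Alternatively, and perhaps more elementary for a clean write-up, I would verify all three lists purely by the recurrence: check the stated formulas for the base indices ($n=0,1,2,3$ in the period-$4$ cases, and $n=0,\dots,5$ in the period-$6$ case), and then confirm that the claimed sequences satisfy $A_{n}=-pA_{n-1}-qA_{n-2}$ by direct substitution; since a sequence satisfying a second-order linear recurrence is determined by two consecutive values, an inductive step on the residue class of $n$ closes the argument. Concretely, for $(3,3)$ one checks the six relations of the form $A_{6n+j}=-3A_{6n+j-1}-3A_{6n+j-2}$ for $j=0,\dots,5$, each reducing to an identity between consecutive entries of the displayed table after incrementing $n$. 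The only mild obstacle is bookkeeping: one must track the alternating signs $(-1)^n$ correctly across the period boundary, where the exponent of $R$ also increments, so the two contributions to each sign must be combined consistently. This is routine but error-prone, so I would organize the verification as a single table of the period block together with its recurrence-predicted successor, making the sign propagation transparent.
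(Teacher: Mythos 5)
Your proposal is correct and follows essentially the same route as the paper: the authors likewise substitute the polar forms of the roots into the Binet formula (\ref{Binet}) to obtain the closed expressions $A_n(-2,2)=2^{n/2}\sin(n\pi/4)$, $A_n(2,2)=(-1)^{n+1}2^{n/2}\sin(n\pi/4)$, $A_n(3,3)=2\cdot 3^{(n-1)/2}\sin(5\pi n/6)$, and then read off the values on each residue class. Your auxiliary recurrence-based verification is a fine alternative but not what the paper does.
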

\begin{proof}
The computations are immediate and follow from expressing $A_n(p,q)$
in closed form, using the formulae at (\ref{Binet}).  For example, we find
\begin{equation*}
A_{n}(-2,2)=2^{\frac{n}{2}}\sin \left( \frac{n\pi}{4} \right).
\end{equation*}
Similarly,
\begin{equation*}
A_{n}(2,2)=(-1)^{n+1} 2^{\frac{n}{2}} \sin \left( \frac{n\pi}{4}\right), \quad A_{n}(3,3)=2\cdot 3^{\frac{n-1}{2}} \sin \left(\frac{5 \pi  n}{6}\right).
\end{equation*}
\end{proof}

It is now straightforward to characterize sequences of exponents $(n,m,k)$ and values of $a$
such that $f(a,x)\equiv 0\bmod{h(x)}$, where $h\in S$.

\begin{thm}\label{div-22}
 Let $n,m,k\in\N$ with $n>m>k$, $\gcd(n,m,k)=1$, $a\in\Z\setminus\{0\}$, and suppose that $x^{n}+x^{m}+x^{k}+a\equiv 0\bmod{h(x)}$ for $h\in S$.

\begin{itemize}
\item[(1)] If $h(x)=x^2-2x+2$ then there exists a positive integer $s$ such that $(n,m,k)=(4s+5,4s+3,4s+2)$ and $a=(-1)^s \cdot 3 \cdot 2^{2s+1}$.

\item[(2)] If $h(x)=x^2+2x+2$ then $k\equiv 0\bmod{2}$ and the following holds:
\begin{itemize}
\item[(a)] If $k=4s-2$ for some $s\in\N$ then $m=4s-1$, $n=4s+4t$ with $t\geq 0$, and
\begin{equation*}
a=2^{2s-1}(-1)^{s+t+1}(2^{2t+1}+(-1)^{t+1}).
\end{equation*}
\item[(b)] If $k=4s$ for some $s\in\N$ then $m=4t+2$, $n=4t+3$ with $t\geq s$ and
\begin{equation*}
a=2^{2s}(-1)^{t+1}(2^{2t-2s+1}+(-1)^{t-s}).
\end{equation*}
\end{itemize}

\item[(3)] If $h(x)=x^2+3x+3$ then $k\equiv 0,4\bmod{6}$ and the following holds:
\begin{itemize}
 \item[(a)] If $k=6s-2$ for some $s\in\N$ then $m=6s-1$, $n=6s+6t$, with $t\geq 0$ and
\begin{equation*}
a=3^{3s-1}(-1)^{s+t+1}(3^{3t+1}+(-1)^{t+1}).
\end{equation*}
\item[(b)] If $k=6s$ for some $s\in\N$ then $m=6t+4$, $n=6t+5$, with $t\geq s$ and
\begin{equation*}
a=3^{3s}(-1)^{t+1}(3^{3t-3s+2}+(-1)^{t-s}).
\end{equation*}
\end{itemize}
\end{itemize}
\end{thm}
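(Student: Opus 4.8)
The plan is to reduce every divisibility to the single equation of Corollary~\ref{cor1}. For $h(x)=x^2+px+q\in S$ one has $h\mid f_{n,m,k}(a,x)$ if and only if
\begin{equation*}
A_n(p,q)+A_m(p,q)+A_k(p,q)=0,
\end{equation*}
and then $a=-(B_n+B_m+B_k)$. Using $B_i=-qA_{i-1}$ from Remark~\ref{rem2}, the second relation reads $a=q\,(A_{n-1}+A_{m-1}+A_{k-1})$. Hence for each of the three pairs $(p,q)\in\{(-2,2),(2,2),(3,3)\}$ it suffices to solve the displayed vanishing condition in integers $n>m>k$, then read off $a$ and check $a\neq 0$.

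The next step is to substitute Lemma~\ref{values}, whose content is that for $h\in S$ every $A_i(p,q)$ is, up to sign, a power of a single prime $\ell$ ($\ell=2$ for the first two pairs, $\ell=3$ for $(3,3)$), with sign and exponent fixed by $i$ modulo the period $4$ (respectively $6$); the one anomaly is the extra unit factor $2$ in $A_{6n+3}(3,3)=2(-1)^n3^{3n+1}$. The vanishing condition thus asserts that a sum of three signed $\ell$-power terms is zero, and I would analyse it $\ell$-adically: if the three valuations were distinct the least would survive, so at least two of the three terms must share the minimal $\ell$-adic valuation. A short inspection of Lemma~\ref{values} shows that two distinct indices share a valuation only when they are consecutive integers lying in one period-block (an index $\equiv 1$ modulo the period never participates in such an equal-valuation pair).

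The decisive observation is the \emph{sign} of such a consecutive equal-valuation pair, which selects between exactly two mechanisms for the sum to vanish: (i) one index is $\equiv 0$ modulo the period, so its term is $0$, while the other two form a cancelling pair $A_i+A_j=0$; or (ii) all three terms are nonzero, two of equal minimal valuation \emph{add} to a single $\ell$-power of valuation one larger, which then cancels the third. For $(p,q)=(-2,2)$ the pair $\{4j+2,4j+3\}$ has \emph{equal} signs, so only mechanism (ii) occurs and forces the triple $\{4j+2,4j+3,4j+5\}$, the single family of part~(1). For $(2,2)$ the pair $\{4j+2,4j+3\}$, and for $(3,3)$ the pair $\{6j+4,6j+5\}$, have \emph{opposite} signs, so only mechanism (i) occurs: a cancelling pair of consecutive indices together with one index $\equiv 0$ modulo the period (for $(3,3)$ one checks in passing that the remaining candidate pair $\{6j+2,6j+3\}$ has unequal magnitudes and contributes nothing). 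The hypothesis $\gcd(n,m,k)=1$ enters precisely here, to discard the degenerate solutions in which all three exponents are divisible by the period; for the surviving families it holds automatically, two of the exponents being consecutive.

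It remains to order each index set as $n>m>k$. In mechanism (i) the zero index lies either above or below the consecutive pair, never between it, and these two positions yield exactly the sub-cases (a) and (b), with the inequalities $t\ge 0$ and $t\ge s$ recording where the zero index sits. Substituting each triple into $a=q\,(A_{n-1}+A_{m-1}+A_{k-1})$ and simplifying with Lemma~\ref{values} produces the displayed closed forms, from which $a\neq 0$ is immediate. I expect the main obstacle to be not any single computation but the \emph{completeness} of this valuation-and-sign bookkeeping: certifying that no unlisted residue combination yields a solution. The most delicate case is $(3,3)$, where the period is $6$, the extra factor $2$ in $A_{6n+3}$ must be tracked throughout, and the assignment of $(n,m,k)$ together with the parameter ranges demands care.
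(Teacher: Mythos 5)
Your proposal is correct and follows essentially the same route as the paper: reduce to $A_n+A_m+A_k=0$ via Corollary~\ref{cor1}, substitute the explicit prime-power values of Lemma~\ref{values}, and run an $\ell$-adic valuation-and-sign analysis on the three-term sum, recovering $a$ from $B_i=-qA_{i-1}$. The paper organizes the same bookkeeping as a case split on $k$ modulo the period (and only writes out $(p,q)=(-2,2)$), whereas your ``two terms must share the minimal valuation'' framing with mechanisms (i)/(ii) is just a cleaner packaging of the identical argument.
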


\begin{proof}
The idea of the proof is straightforward, and only an outline is presented. Using the values
computed in Lemma \ref{values} we perform a case by case analysis depending on the value
of $k\bmod{4}$ in the cases $(p,q)\in\{(-2,2),(2,2)\}$, and on $k\bmod{6}$ in the case
$(p,q)=(3,3)$. We illustrate the proof in the case $(p,q)=(-2,2)$.

First, observe that for $ r \not \equiv 0 \bmod{4}$, then the power of $2$ dividing
$A_r(-2,2)$ is exactly $\lfloor\frac{r}{2}\rfloor$.
Now if $k\equiv 0\bmod{4}$, then equation (\ref{diveq}) implies that $A_n(-2,2)+A_m(-2,2)=0$,
and clearly neither $m$ nor $n$ can be divisible by $4$ (otherwise $n,m,k$ are each
divisible by $4$, contradicting coprimality). Thus
$\lfloor\frac{n}{2}\rfloor=\lfloor\frac{m}{2}\rfloor$, forcing $(n,m)=(4s+3,4s+2)$ for some
integer $s$, which cannot satisfy (\ref{diveq}).
If $k\equiv 1\bmod{2}$, then
$\lfloor\frac{n}{2}\rfloor \geq \lfloor\frac{m}{2}\rfloor > \lfloor\frac{k}{2}\rfloor$, which
leads to an impossible congruence mod $2^{\lfloor\frac{m}{2}\rfloor}$
in equation (\ref{diveq}).
If $k\equiv 2\bmod{4}$ then $k=4s+2$, say, and
$\lfloor\frac{n}{2}\rfloor \geq \lfloor\frac{m}{2}\rfloor \geq 2s+1$. Equation (\ref{diveq})
forces $\lfloor\frac{m}{2}\rfloor=2s+1$, that is,
$m=4s+3$. This gives $A_n(-2,2)+(-1)^s2^{2s+2}=0$, so that $n=4s+5$.
The value of $a$ follows from $a=-B_n(p,q)-B_m(p,q)-B_k(p,q)$, coupled with the identity
$B_{n}(p,q)=-qA_{n-1}(p,q)$.

A similar, but slightly more tedious, analysis can be performed for $(p,q)\in\{(2,2),(3,3)\}$,
and we omit the details.
\end{proof}

\begin{rem}
{\rm Note that if $h(x)=x^2\pm2x+2$, then for each $s\in\N$ the polynomial
$H(x)=2^{2s}h(x/2^{s})=x^2\pm2^{s+1}x+2^{2s+1}$ divides $f(a,x)$ for infinitely many $a$
and sequences of exponents ($n,m,k)$. A similar property holds for the polynomial
$H(x)=3^{2s}h(x/3^{s})=x^2+3^{s+1}x+3^{2s+1}$, where $h(x)=x^2+3x+3$. This observation follows
from the following fact: for any $t\in\Z$ we have $A_{n}(tp,t^2q)=t^{n-1}A_{n}(p,q)$. Moreover,
for $(p,q)\in\{(-2,2),(2,2)\}$ the only prime which divides $A_{n}(p,q)$ is equal to 2.
A similar property holds for $(p,q)=(3,3)$. Indeed, in this case the only prime dividing
$A_{n}(p,q)$ is equal to 3 provided that $n\not\equiv 3\bmod{6}$, in which case the prime
$2$ can also occur, although only to exponent $1$. All these remarks follows from
Lemma \ref{values}.
}
\end{rem}

We finish this section with the following conjecture.

\begin{conj}\label{divconj}
Let $h\in\Z[x]$ of degree $\geq 3$ and suppose that $h$ does not divide any polynomial of the
form $x^p-1$. Then there are only finitely many triples of exponents $n,m,k\in\N$ with $n>m>k$
and integers $a$ such that $f_{n,m,k}(a,x)\equiv 0\bmod{h(x)}$.
\end{conj}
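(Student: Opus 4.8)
The plan is to translate the divisibility $h(x)\mid f_{n,m,k}(a,x)$ into a system of exponential equations in the roots of $h$ and then bring to bear the finiteness theory for $S$-unit equations. Assume first that $a\neq 0$; the case $a=0$ forces $h\mid x^{k}(x^{n-k}+x^{m-k}+1)$, and after stripping the factor $x^{k}$ this reduces to a trinomial divisibility, finite by the companion results of \cite{BreUlas}. With $a\neq 0$ the element $0$ is not a root of $h$; write the distinct roots as $\theta_{1},\dots,\theta_{r}\in\overline{\Q}$. By the same reasoning as Corollary \ref{cor1}, divisibility is equivalent to $\theta_{i}^{\,n}+\theta_{i}^{\,m}+\theta_{i}^{\,k}=-a$ for every $i$, together with the vanishing of the derivative sums $n\theta_{i}^{\,n-1}+m\theta_{i}^{\,m-1}+k\theta_{i}^{\,k-1}=0$ at each repeated root (these polynomial–exponential constraints are comparatively rigid and I would dispatch them separately). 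Let $K=\Q(\theta_{1},\dots,\theta_{r})$ and let $S$ be the finite set of places of $K$ containing the archimedean places and those at which some $\theta_{i}$ fails to be a unit, so that every $\theta_{i}$ is an $S$-unit.

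For a pair of distinct roots $\theta_{i},\theta_{j}$, subtracting the two defining equations gives the vanishing sum of six $S$-units
\[
\theta_{i}^{\,n}+\theta_{i}^{\,m}+\theta_{i}^{\,k}-\theta_{j}^{\,n}-\theta_{j}^{\,m}-\theta_{j}^{\,k}=0 .
\]
If $\theta_{i}/\theta_{j}$ is not a root of unity this is governed, exactly as in the degree-two discussion of Section \ref{section6}, by a binary recurrence and yields only finitely many $(n,m,k)$; equivalently one may invoke the theorem of Evertse--Schlickewei--Schmidt, since a projective solution of this $S$-unit relation records the ratios $\theta_{i}^{\,n-m},\theta_{i}^{\,m-k}$ and $(\theta_{j}/\theta_{i})^{n}$, and when $\theta_{i}$ and $\theta_{j}/\theta_{i}$ are non-roots-of-unity these determine $n-m,m-k$ and $n$ up to finitely many choices. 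So the first task is to locate a pair $\theta_{i},\theta_{j}$ with $\theta_{i},\,\theta_{j}/\theta_{i}$ both non-roots-of-unity, using the hypothesis $h\nmid x^{p}-1$ (which in the squarefree case says precisely that some root is not a root of unity).

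The substantive work is the degenerate locus, where all pairwise ratios $\theta_{l}/\theta_{1}$ are roots of unity; write $\theta_{l}=\zeta_{l}\theta_{1}$. For each fixed residue class of $(n,m,k)$ modulo the common order of the $\zeta_{l}$, the equalities $\theta_{1}^{\,n}+\theta_{1}^{\,m}+\theta_{1}^{\,k}=\theta_{l}^{\,n}+\theta_{l}^{\,m}+\theta_{l}^{\,k}$ become a homogeneous linear system
\[
(\zeta_{l}^{\,n}-1)\,\theta_{1}^{\,n}+(\zeta_{l}^{\,m}-1)\,\theta_{1}^{\,m}+(\zeta_{l}^{\,k}-1)\,\theta_{1}^{\,k}=0,\qquad l=2,\dots,r,
\]
in the three quantities $\theta_{1}^{\,n},\theta_{1}^{\,m},\theta_{1}^{\,k}$. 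When $r\geq 4$ this $(r-1)\times 3$ system has full rank for all but finitely many residue classes, giving a contradiction and leaving only finitely many triples to examine class by class.

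The hard part is the case $r=3$ (and the companion situation in which repeated roots make the number of distinct roots effectively small), where the system has rank at most two and is automatically solvable, so it only pins down the differences $n-m$ and $m-k$ and leaves $n$ itself free. Here I would pass to integrality: with the differences fixed one has $a=-\theta_{1}^{\,k}C$ for a fixed algebraic constant $C\neq 0$, and requiring $a\in\Q$ for infinitely many $k$ forces $\theta_{1}^{\,N}\in\Q$ for some $N$, whence $h\mid x^{N}-\gamma$ for a rational $\gamma$. The final step is to show that $h\nmid x^{p}-1$ suffices to preclude this; this is the delicate point, because the twisted-cyclotomic families $x^{N}-\gamma$ with $\gamma\neq 1$ must be excluded, and controlling these is, I expect, the principal obstacle to a complete proof.
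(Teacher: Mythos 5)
First, a point of context: the paper contains no proof of this statement to compare against --- it is Conjecture \ref{divconj}, explicitly left open at the end of Section \ref{section6} --- so your attempt must stand or fall on its own. Your overall strategy (rewrite $h(x)\mid f_{n,m,k}(a,x)$ as $\theta_i^n+\theta_i^m+\theta_i^k=-a$ at each root, subtract pairs to get six-term $S$-unit equations, apply Evertse--Schlickewei--Schmidt off the degenerate locus, and then grind through the locus where all ratios $\theta_l/\theta_1$ are roots of unity) is the natural line of attack, and you have correctly located where it stalls. But the point you defer as ``delicate'' is in fact fatal to the statement as written: the degenerate family $h\mid x^N-\gamma$, $\gamma\in\Q$, consists of genuine counterexamples, not residual cases awaiting exclusion. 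Concretely, take $h(x)=x^3-2$: it has degree $3$, divides no $x^p-1$ (its roots have modulus $2^{1/3}\neq 1$), and yet $x^3\equiv 2 \bmod{h(x)}$ gives
\begin{equation*}
x^{3s}+x^{3t}+x^{3u}-(2^s+2^t+2^u)\equiv 0 \bmod{(x^3-2)}
\end{equation*}
for every $s>t>u\geq 1$, i.e.\ infinitely many triples $(n,m,k)=(3s,3t,3u)$ with integer $a$. This is exactly the configuration your $r=3$ analysis produces ($\theta_1=2^{1/3}$, $\zeta_l$ cube roots of unity, $\theta_1^3\in\Q$), so no refinement of your method --- or any method --- can close the gap unless the hypothesis is strengthened, e.g.\ by requiring $\gcd(n,m,k)=1$ (as in Theorem \ref{div-22}, and apparently implicit throughout Section \ref{section6}) or by assuming $h$ divides no binomial $x^N-\gamma$ with $\gamma\in\Q$.

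Two secondary defects, which would matter even for a corrected statement. Your claim that for $r\geq 4$ the system $(\zeta_l^n-1)\theta_1^n+(\zeta_l^m-1)\theta_1^m+(\zeta_l^k-1)\theta_1^k=0$ has full rank ``for all but finitely many residue classes'' cannot be right as a finiteness mechanism: the residue class $n\equiv m\equiv k\equiv 0$ annihilates every coefficient for every $r$, and a single rank-deficient class can carry infinitely many triples --- this is precisely the class the counterexample above inhabits, so large $r$ does not escape the problem, and each deficient class must be analyzed as in your $r=3$ case. (Relatedly, the non-degenerate ESS step needs a systematic enumeration of vanishing subsums of the six-term relation; your sketch only treats the clean case.) Finally, the $a=0$ reduction cites \cite{BreUlas}, which concerns reducibility \emph{types} of trinomials and says nothing about a fixed $h$ dividing infinitely many trinomials; for that you would want the divisor theory of trinomials in Schinzel \cite{Sch1} or results in the style of Hajdu--Tijdeman \cite{HajTij}, and the same root-of-unity degeneracy must be confronted there too.
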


\section{Numerical results, open questions and conjectures}\label{section7}

In this section we collect some numerical computations, questions and conjectures concerning
various aspects of reducibility of the quadrinomial $f(a,x)$ with $a\in\Q$. We start with
a very natural question concerning the existence of multiple roots of $f_{n,m,k}(a,x)$. Note
the example
\begin{equation*}
 x^4 + x^3 + x + 1=(x+1)^2(x^2-x+1)
\end{equation*}
which shows that there exists $a$ such that $f(a,x)$ has a double root. In fact,
$x=-1$ is a double root of $x^{m+k}+x^m+x^k+1$, where $m,k$ are odd integers, $m>k$.
A question arises as to whether one can find other $a\in\Q$ with $f_{n,m,k}(a,x)$ having
a multiple root. We have not found any examples. This motivates the following.


\begin{conj}
If $a\in\Q^{*}$ and $f_{n,m,k}(a,x)=x^n+x^m+x^k+a$ with $n>m>k$ has multiple factors,
then $a=1$ and $(n,m,k)=(d(t+u),dt,du)$ for $d\geq 1$ and odd integers $t>u\geq 1$.
\end{conj}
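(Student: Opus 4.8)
The plan is to reduce to the \emph{primitive} case $\gcd(n,m,k)=1$, then to show that a repeated root must be a root of unity, and finally to pin down $a$ and the exponents by an elementary analysis of a vanishing sum of roots of unity. Since we work in characteristic $0$, having a multiple factor over $\Q$ is equivalent to $f_{n,m,k}(a,x)$ having a repeated root $\alpha\in\overline{\Q}$, i.e. $f(\alpha)=f'(\alpha)=0$; and because $f(0)=a\neq0$ we have $\alpha\neq0$. First I would set $d=\gcd(n,m,k)$ and write $f_{n,m,k}(a,x)=\tilde f(x^{d})$ with $\tilde f(y)=y^{n/d}+y^{m/d}+y^{k/d}+a$, so that $\gcd(n/d,m/d,k/d)=1$. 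From $f'(x)=d\,x^{d-1}\tilde f{}'(x^{d})$ and $\alpha\neq0$ one checks that $f$ has a repeated root if and only if $\tilde f$ does, the corresponding root of $\tilde f$ being $\alpha^{d}$. Hence it suffices to treat the case $\gcd(n,m,k)=1$ and to prove that there a repeated root forces $a=1$, the root to equal $-1$, and $n=m+k$ with $m,k$ odd; undoing $x\mapsto x^{d}$ then yields $(n,m,k)=(d(t+u),dt,du)$ with $t=m/d>u=k/d$ odd, exactly as claimed.

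So assume $\gcd(n,m,k)=1$ and let $\alpha\neq0$ be a repeated root. The relations $f(\alpha)=0$ and $\alpha f'(\alpha)=0$ read
\begin{equation*}
\alpha^{n}+\alpha^{m}+\alpha^{k}=-a,\qquad n\alpha^{n}+m\alpha^{m}+k\alpha^{k}=0.
\end{equation*}
Eliminating one power at a time (from $nf-xf'$ and $mf-xf'$) shows equivalently that $\alpha$ is a common root of the two trinomials $(n-m)x^{m}+(n-k)x^{k}+na$ and $(n-m)x^{n}+(k-m)x^{k}-ma$, whose constant terms $na,\,-ma$ are nonzero. The decisive step I would carry out next is to prove that $\alpha$ is a root of unity. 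Working in $K=\Q(\alpha)$, the relation $n\alpha^{n-k}+m\alpha^{m-k}+k=0$ is, after normalising, an $S$-unit-type two-term equation, and I would try to deduce that every absolute value of $\alpha$ equals $1$, so that $\alpha$ is a root of unity by a Kronecker-type argument. \textbf{This is the main obstacle.} For a fixed triple $(n,m,k)$ the two trinomial relations cut out a curve and finiteness is standard, but I do not see how to make the root-of-unity conclusion \emph{uniform} in $(n,m,k)$ without invoking deep Diophantine input (linear forms in logarithms, or the full force of the theory of $S$-unit equations); this is precisely why the assertion is stated only as a conjecture.

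Granting that the repeated root is a root of unity $\zeta$, the rest is elementary. The identity $n\zeta^{n}+m\zeta^{m}+k\zeta^{k}=0$ is a vanishing sum of three roots of unity with \emph{distinct} positive integer weights $n>m>k$; by the theory of vanishing sums of roots of unity (Mann's theorem and the Conway--Jones classification), no such relation exists with $\zeta^{n},\zeta^{m},\zeta^{k}$ pairwise distinct, since a minimal relation among distinct roots of unity would force equal coefficients. Hence two of $\zeta^{n},\zeta^{m},\zeta^{k}$ coincide. If $\zeta^{n}=\zeta^{m}$ then $\zeta^{n-k}=-k/(n+m)$ has modulus $<1$, impossible for a root of unity; likewise $\zeta^{n}=\zeta^{k}$ is impossible. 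Thus $\zeta^{m}=\zeta^{k}$, and $n\zeta^{n}+(m+k)\zeta^{m}=0$ gives $\zeta^{n-m}=-(m+k)/n$, forcing $n=m+k$ and $\zeta^{k}=\zeta^{n-m}=-1$. Therefore $(\zeta^{n},\zeta^{m},\zeta^{k})=(1,-1,-1)$, so $a=-(\zeta^{n}+\zeta^{m}+\zeta^{k})=1$. Finally, letting $N$ be the order of $\zeta$, the conditions $\zeta^{n}=1$ and $\zeta^{m}=\zeta^{k}=-1$ force $N$ even with $m,k$ odd multiples of $N/2$ and $N\mid n$; then $N/2$ divides $\gcd(n,m,k)=1$, so $N=2$, i.e. $\zeta=-1$ with $m,k$ odd and $n=m+k$. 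Undoing the primitive reduction completes the argument.
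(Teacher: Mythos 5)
This statement is a conjecture in the paper, not a theorem: the authors state explicitly that they were unable to prove it, and they establish only the weaker Theorem \ref{tripleroot}, that $f_{n,m,k}(a,x)$ with $a\in\Q^{*}$ never has a root of multiplicity $\geq 3$ (proved by an elementary $p$-adic/divisibility analysis of the relations $\theta^{n-m}=-\frac{m(m-k)}{n(n-k)}$ and $\theta^{m-k}=-\frac{k(n-k)}{m(n-m)}$ after reducing to $\gcd(n,m,k)=1$). So there is no proof in the paper to compare against, and your proposal does not supply one either: as you yourself flag, the decisive step --- that a repeated root $\alpha$ of $x^{n}+x^{m}+x^{k}+a$ must be a root of unity, uniformly in $(n,m,k)$ --- is missing, and it is precisely the open content of the conjecture. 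The single relation $n\alpha^{n-k}+m\alpha^{m-k}+k=0$ does not by itself force $|\alpha|_{v}=1$ at every place; for fixed exponents it only cuts out a curve, and no uniform Kronecker-type conclusion follows from standard $S$-unit or Baker-type results without genuinely new input. A proof attempt must either close this step or be presented as conditional.

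That said, the parts you do carry out are correct and worth recording. Your reduction to the primitive case $\gcd(n,m,k)=1$ is exactly the paper's own Lemma (multiplicities descend under $x\mapsto x^{d}$, since the $d$ lifts of distinct roots remain distinct). Your conditional endgame is also sound: granting that the repeated root is a root of unity $\zeta$, the vanishing sum $n\zeta^{n}+m\zeta^{m}+k\zeta^{k}=0$ with distinct positive weights cannot have $\zeta^{n},\zeta^{m},\zeta^{k}$ pairwise distinct (a vanishing two-term subsum forces equal weights, and a minimal three-term relation forces, by Mann's theorem, ratios in $\mu_{6}$ whose imaginary parts impose equal weights), so two powers coincide; the only case surviving the modulus test is $\zeta^{m}=\zeta^{k}$, giving $n=m+k$, $\zeta^{m}=\zeta^{k}=-1$, $\zeta^{n}=1$, hence $a=1$, and primitivity forces $\zeta=-1$ with $m,k$ odd. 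This correctly isolates where the difficulty lies, but as written it is a conditional argument, not a proof of the conjecture; if you want an unconditional result along these lines, aim instead at the paper's Theorem \ref{tripleroot}.
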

Although we were unable to prove this conjecture we can offer a slightly weaker result.
\begin{thm}\label{tripleroot}
There does not exist $a\in\Q^{*}$ such that the polynomial $f_{n,m,k}(a,x)$ has a root of multiplicity $\geq 3$.
\end{thm}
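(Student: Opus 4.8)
The plan is to argue that a root of multiplicity $\geq 3$ would force three simultaneous polynomial conditions that are incompatible for $a \neq 0$. Suppose $\alpha$ is such a root. Then $f(a,x) = x^n + x^m + x^k + a$ satisfies $f(a,\alpha) = 0$, $f'(a,\alpha) = 0$, and $f''(a,\alpha) = 0$, where derivatives are taken with respect to $x$. Writing these out,
\begin{align*}
& \alpha^n + \alpha^m + \alpha^k + a = 0, \\
& n\alpha^{n-1} + m\alpha^{m-1} + k\alpha^{k-1} = 0, \\
& n(n-1)\alpha^{n-2} + m(m-1)\alpha^{m-2} + k(k-1)\alpha^{k-2} = 0.
\end{align*}
Since $a \neq 0$, we have $\alpha \neq 0$, so I may multiply through by suitable powers of $\alpha$ without losing information. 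First I would clear denominators in the second and third equations by multiplying by $\alpha$, obtaining $n\alpha^n + m\alpha^m + k\alpha^k = 0$ and $n(n-1)\alpha^n + m(m-1)\alpha^m + k(k-1)\alpha^k = 0$, two linear relations in the three monomials $\alpha^n, \alpha^m, \alpha^k$.

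The key step is to treat $u = \alpha^n$, $v = \alpha^m$, $w = \alpha^k$ as unknowns in the linear system given by the two differentiated equations together with the combination coming from subtracting appropriate multiples. From
\[ n u + m v + k w = 0, \qquad n(n-1) u + m(m-1) v + k(k-1) w = 0, \]
subtracting $(n-1)$ times the first from the second eliminates $u$ and yields $m(m-n)v + k(k-n)w = 0$, hence $v/w = -\,k(k-n)/\bigl(m(m-n)\bigr)$, a nonzero rational number since $n > m > k \geq 1$. Similarly one obtains $u/w$ as a nonzero rational. Thus $\alpha^{n-k} = u/w$ and $\alpha^{m-k} = v/w$ are both rational, which already forces $\alpha$ to be algebraic of a controlled type; combined with the first (undifferentiated) equation, which gives $a = -(u + v + w) = -w(u/w + v/w + 1)$, every quantity is pinned down rationally in terms of $w = \alpha^k$. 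The main obstacle, and where the real work lies, is converting the rationality of the ratios $\alpha^{n-k}$ and $\alpha^{m-k}$ into a contradiction: I expect to show that the exponent relations are over-determined, so that consistency of $\alpha^{n-k}$, $\alpha^{m-k}$, and $\alpha^{\gcd}$ forces $\alpha$ to a root of unity or to $0$, and then to check that no such $\alpha$ can satisfy all three original equations with $a \neq 0$.

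To make this rigorous I would most likely pass to the resultant or discriminant formulation: a root of multiplicity $\geq 3$ is equivalent to $\alpha$ being a common root of $f$, $f'$, and $f''$, so it is a root of $\gcd(f', f'')$ of degree $\geq 1$ that is also a root of $f$. Since $f' = nx^{n-1} + mx^{m-1} + kx^{k-1} = x^{k-1}(nx^{n-k} + mx^{m-k} + k)$ and $\alpha \neq 0$, the relevant factor is the trinomial $g(x) = nx^{n-k} + mx^{m-k} + k$, and $f''$ similarly produces a trinomial $h(x)$; a shared root of $g$ and $h$ satisfies the two linear monomial relations above. The cleanest route is to eliminate directly: the two relations force $\alpha^{m-k} = c_1$ and $\alpha^{n-m} = c_2$ for explicit nonzero rationals $c_1, c_2$, and I then substitute back into $g(\alpha) = 0$ to get a single rational equation that $\alpha^{n-k}$ must satisfy, which I expect to have no solution compatible with $a \neq 0$. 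I anticipate the hardest part is handling the dependence on the three exponents uniformly, since the rationals $c_1, c_2$ depend on $n, m, k$; I would argue that the positivity and ordering $n > m > k \geq 1$ make all the relevant coefficients in $g$ and $h$ strictly positive, so that $g(\alpha) = 0$ with $\alpha$ a positive real is impossible, and a short separate argument (using that $\alpha^{m-k}, \alpha^{n-m}$ are positive rationals, hence $\alpha$ is real) rules out the remaining cases, yielding the theorem.
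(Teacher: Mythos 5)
Your opening moves coincide with the paper's: from $f'(\alpha)=0$ and $f''(\alpha)=0$ (equivalently, from $g(\alpha)=g'(\alpha)=0$ with $g(x)=nx^{n-k}+mx^{m-k}+k$) one extracts that $\alpha^{n-m}$ and $\alpha^{m-k}$ are explicit nonzero rationals, namely $\alpha^{n-m}=-\tfrac{m(m-k)}{n(n-k)}$ and $\alpha^{m-k}=-\tfrac{k(n-k)}{m(n-m)}$. But your proposed route from there to a contradiction does not work, for two reasons. First, there is a sign error: these two ratios are \emph{negative} rationals, not positive ones, so the claim that $\alpha$ is forced to be a positive real (where $g$, having all positive coefficients, cannot vanish) is unavailable; and even with correct signs, $\alpha^{n-k}=\alpha^{n-m}\cdot\alpha^{m-k}$ comes out positive, so $g(\alpha)=n\alpha^{n-k}+m\alpha^{m-k}+k=0$ has the sign pattern $+,-,+$ and yields no contradiction from positivity. (Also, rationality of $\alpha^{m-k}$ does not by itself make $\alpha$ real.)

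Second, and more importantly, the entire difficulty of the theorem lies in the step you leave as ``I expect [it] to have no solution.'' The only remaining constraint is the compatibility condition obtained by raising the two power relations to exponents $\tfrac{m-k}{d}$ and $\tfrac{n-m}{d}$ with $d=\gcd(n-m,m-k)$, which (after checking both quotients of exponents are odd, so the signs match) reduces to the purely Diophantine identity
\begin{equation*}
k^{\frac{n-m}{d}}\,(n-k)^{\frac{n-k}{d}}\,n^{\frac{m-k}{d}} \;=\; m^{\frac{n-k}{d}}\,(m-k)^{\frac{m-k}{d}}\,(n-m)^{\frac{n-m}{d}}.
\end{equation*}
Showing this has no solutions with $n>m>k\geq 1$ is the real content of the proof: the paper first reduces to $\gcd(n,m,k)=1$ and then to $\gcd(n,k)=1$, writes the two sides as coprime-power data $r/s=(a/b)^{(n-m)/d}$, $u/v=(a/b)^{(m-k)/d}$, and runs a careful two-case analysis on whether $b^{(n-m)/d}$ divides $n$ or $n-k$, tracking exact powers of $b$ until a parity or valuation contradiction appears. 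None of this is present or even sketched in your proposal, so as it stands the argument has a genuine gap precisely where the theorem stops being routine.
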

\noindent
In view of the following Lemma, it suffices to assume that $(n,m,k)=1$. We state the Lemma
in the very concrete form needed for our purposes, although it can clearly be stated and
proved in a more general setting.
\begin{lem}
Suppose that $f_{dn,dm,dk}(a,x)=x^{dn}+x^{dm}+x^{dk}+a$ has a root of multiplicity $N$.
Then $f_{n,m,k}(a,x)$ has a root of multiplicity $N$.
\end{lem}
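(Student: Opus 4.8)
The plan is to exploit the obvious substitution relating the two quadrinomials. Writing $g(y)=f_{n,m,k}(a,y)=y^n+y^m+y^k+a$, one has the identity $f_{dn,dm,dk}(a,x)=g(x^d)$, so the entire question becomes: how does the operation $G(x):=g(x^d)$ transform the multiplicity of a root? I would let $\alpha\in\overline{\Q}$ be a root of $G$ of multiplicity $N$ and put $\beta=\alpha^d$, which is a root of $g$. Since $a\in\Q^{*}$ we have $G(0)=g(0)=a\neq0$, so $\alpha\neq0$ and hence $\beta\neq0$; this nonvanishing is what makes the argument go through.

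First I would factor $g$ over $\overline{\Q}$ as $g(y)=(y-\beta)^{M}h(y)$ with $h(\beta)\neq0$, where $M$ denotes the multiplicity of $\beta$ as a root of $f_{n,m,k}(a,x)$. Substituting $y=x^d$ gives $G(x)=(x^d-\beta)^{M}h(x^d)$. The key step is that $x^d-\beta$ is separable: its derivative $dx^{d-1}$ vanishes only at $0$, which is not a root of $x^d-\beta$ because $\beta\neq0$ and we are in characteristic $0$. Hence $x^d-\beta=\prod_{i=1}^{d}(x-\alpha_i)$ with the $\alpha_i$ pairwise distinct, and $\alpha$ is one of them, so $(x-\alpha)$ divides $x^d-\beta$ exactly once.

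Then I would simply read off the multiplicity: the factor $(x^d-\beta)^{M}$ contributes $(x-\alpha)$ to the power $M$, while $h(x^d)$ does not vanish at $\alpha$ since $h(\alpha^d)=h(\beta)\neq0$. Therefore $\alpha$ is a root of $G$ of multiplicity exactly $M$, which forces $M=N$. In other words $f_{n,m,k}(a,x)$ has the root $\beta$ of multiplicity $N$, as claimed.

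There is essentially no deep obstacle here; the whole content is that the map $x\mapsto x^d$ is unramified away from the origin in characteristic zero, and therefore preserves root multiplicities of $g$ at nonzero points. The only subtlety worth flagging is the exclusion of the origin, which is precisely where ramification would occur. This is handled by the hypothesis $a\neq0$, and it is also the reason the statement genuinely requires $a\in\Q^{*}$: for $a=0$ the root $x=0$ has multiplicity $dk$ for $f_{dn,dm,dk}(0,x)$ but only $k$ for $f_{n,m,k}(0,x)$, so the conclusion fails. I would make sure to state the nonzero-root reduction explicitly at the outset to keep this point clean.
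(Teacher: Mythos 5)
Your proof is correct and rests on the same key fact as the paper's: since $a\neq 0$ every root is nonzero, so the $d$ preimages of a root $\beta$ under $x\mapsto x^d$ are distinct and the substitution preserves multiplicities. The paper packages this as an explicit enumeration of the $dn$ roots $z^{i}r_j^{1/d}$ and a count, while you phrase it via the factorization $(y-\beta)^M\mapsto(x^d-\beta)^M$ and separability of $x^d-\beta$; these are the same argument, and your explicit remark that $a\neq 0$ is genuinely needed is a welcome clarification of a hypothesis the paper leaves implicit.
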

\begin{proof}
Let the $n$ roots (all non-zero) of $f_{n,m,k}(a,x)=0$ in $\mathbb{C}$ be $\{r_1,...,r_n\}$.
Let $z$ be a fixed $d$-th root of unity, for example $z=e^{2i\pi/d}$.
Then the $dn$ roots of $f_{dn,dm,dk}(x)=f_{n,m,k}(x^d)=0$ are given by
\[ z^{i_1}r_1^{1/d}, \; z^{i_2} r_2^{1/d}, \; ..., \; z^{i_n} r_n^{1/d}, \quad  i_k=0,...,d-1, \]
for a fixed d-th root $r_i^{1/d}$ of $r_i$, $i=1,,,,n$. Clearly for fixed $r_i^{1/d}$,
the $d$ roots $z^{i_k} r_i^{1/d}$, $i_k=0,...,d-1$, are distinct. If we assume
$f(x^d)$ has a root of multiplicity $N$, then without loss of generality the multiple
root $\rho$ satisfies
\[ \rho = z^{i_1} r_1^{1/d}  = z^{i_2} r_2^{1/d} = ... = z^{i_N} r_N^{1/d}. \]
On raising to the $d$-th power,
\[ r_1=r_2=...=r_N ( = \rho^d), \]
so that $f(x)$ has a root of multiplicity $N$.
\end{proof}
\noindent
Without loss of generality, therefore, we may assume henceforth that $(n,m,k)=1$. \\ \\
{\it Proof of Theorem \ref{tripleroot}.}
Suppose $f_{n,m,k}(x)$ has a triple root $\theta$.
Certainly $\theta$ is a double root of the first derivative
$f_{n,m,k}'(x)=n x^{n-1}+m x^{m-1}+k x^{k-1}$, so a double root of $g(x)=n x^{n-k}+m x^{m-k}+k$.
Accordingly, $\theta$ is also a root of $g'(x)=n(n-k) x^{n-k-1}+m(m-k) x^{m-k-1}$.
This latter gives
\begin{equation}
\label{n-m}
\theta^{n-m} = -\frac{m(m-k)}{n(n-k)}.
\end{equation}
Now
\[ g(\theta)=\theta^{m-k} ( n \theta^{n-m} + m ) + k = 0, \]
so that
\[ \theta^{m-k} ( -\frac{m(m-k)}{n-k} + m ) + k = 0, \]
that is,
\begin{equation}
\label{m-k}
\theta^{m-k} = -\frac{k(n-k)}{m(n-m)}.
\end{equation}

Let $d=\gcd(n-m,m-k)$, and set $\Theta=\theta^d$. Then
\[ \Theta^{\frac{n-m}{d}} = -\frac{m(m-k)}{n(n-k)}< 0, \qquad \Theta^{\frac{m-k}{d}} = -\frac{k(n-k)}{m(n-m)}<0, \]
with $\gcd(\frac{n-m}{d}, \frac{m-k}{d})=1$. Thus $\Theta$ itself is rational, $\Theta<0$, and
\begin{equation}
\label{odd}
\frac{n-m}{d} \equiv 1 \bmod{2}, \qquad \frac{m-k}{d} \equiv 1 \bmod{2}.
\end{equation}

Raising (\ref{n-m}) to the $\frac{m-k}{d}$-th power and (\ref{m-k}) to the $\frac{n-m}{d}$-th
power gives
\begin{equation}
\label{contracted}
\left(\frac{m(m-k)}{n(n-k)}\right)^{\frac{m-k}{d}} = \left(\frac{k(n-k)}{m(n-m)} \right)^{\frac{n-m}{d}}.
\end{equation}

This can be written
\[ k^\frac{n-m}{d} (n-k)^\frac{n-k}{d} n^\frac{m-k}{d} = m^\frac{n-k}{d} (m-k)^\frac{m-k}{d} (n-m)^\frac{n-m}{d}, \]
from which it follows that if $p$ is a prime with $p \mid (n,k)$, then $p \mid m$.
Accordingly we may assume that $(n,k)=1$.
Write
\[ \frac{m(m-k)}{n(n-k)} = \frac{r}{s}, \quad (r,s)=1, \qquad \frac{k(n-k)}{m(n-m)} = \frac{u}{v}, \quad (u,v)=1. \]
From (\ref{contracted}),
\[ \frac{r}{s} = \left(\frac{a}{b}\right)^{\frac{n-m}{d}}, \qquad \frac{u}{v} = \left(\frac{a}{b}\right)^{\frac{m-k}{d}}, \qquad (a,b)=1, \]
so that
\[ r=a^{\frac{n-m}{d}}, \quad s=b^{\frac{n-m}{d}}, \qquad u=a^{\frac{m-k}{d}}, \quad v=b^{\frac{m-k}{d}}. \]
Thus we obtain for integers $t,w$:
\begin{equation}
\label{tweqs}
m(m-k) = a^{\frac{n-m}{d}} t, \; n(n-k) = b^{\frac{n-m}{d}} t, \; k(n-k) = a^{\frac{m-k}{d}} w, \; m(n-m) = b^{\frac{m-k}{d}} w.
\end{equation}
Now
\[ (b^{\frac{n-m}{d}}-a^{\frac{n-m}{d}}) t = n(n-k)-m(m-k)=(n-m)(n+m-k)>0, \]
so that $b>a\geq 1$. Further, $b^{\frac{n-m}{d}}$ divides $n(n-k)$, and  $(n,n-k)=1$, so
either $b^{\frac{n-m}{d}}$ divides $n$ or $b^{\frac{n-m}{d}}$ divides $n-k$.  \\

\noindent
{\bf Case I: } $b^{\frac{n-m}{d}} \mid n, \; b \nmid n-k$. \\ \\
Write $n = b^{\frac{n-m}{d}+f} N$, with $f \geq 0$ and $(b,N)=1$.
Then the second equation at (\ref{tweqs}) gives $t=b^{f} N (n-k)$,
and substituting into the three other equations at (\ref{tweqs}),
\[ m(m-k) = a^{\frac{n-m}{d}} b^f N (n-k), \qquad k(n-k) = a^{\frac{m-k}{d}} w, \qquad m(b^{\frac{n-m}{d}+f} N - m) = b^{\frac{m-k}{d}} w. \]
The second equation tells us $b \nmid w$; and the third equation
that $b \mid m$.
Write $m=b^g M$, $g \geq 1$, $(b,M)=1$. Then the first and third equations give
\[ b^g M (b^g M - k) = a^{\frac{n-m}{d}} b^f N (n-k), \qquad b^g M (b^{\frac{n-m}{d}+f} N - b^g M) = b^{\frac{m-k}{d}} w. \]
Comparing powers of $b$, the first equation gives $f=g$. From the second,
since $\frac{n-m}{d}+f>g$, we deduce $2g=\frac{m-k}{d}$, which contradicts (\ref{odd}). \\ \\
{\bf Case II: } $b^{\frac{n-m}{d}} \mid n-k$, \; $b \nmid n$. \\ \\
Write $n-k=b^{\frac{n-m}{d}+f} N$, with $f \geq 0$ and $(b,N)=1$.
Then the second equation at (\ref{tweqs}) gives $t=n b^f N$,
and substituting into the remaining three equations at (\ref{tweqs}),
\[ m(m-k) = a^{\frac{n-m}{d}} n b^f N, \qquad k b^{\frac{n-m}{d}+f} N = a^{\frac{m-k}{d}} w, \qquad m(n-m) = b^{\frac{m-k}{d}} w. \]
The second equation implies $w=b^{\frac{n-m}{d}+f} W$, $(b,W)=1$. The first and third
equations now give
\[ m(m-k) = a^{\frac{n-m}{d}} n b^f N, \qquad m(n-m) = b^{\frac{n-k}{d}+f} W. \]
Adding,
\[ m(n-k) = a^{\frac{n-m}{d}} n b^f N + b^{\frac{n-k}{d}+f} W. \]
Since $f< \frac{n-k}{d}+f$, the power of $b$ exactly dividing the right hand side
is equal to $f$. But the power of $b$ dividing the left hand side is at least
$\frac{n-m}{d}+f$, incompatible.
These contradictions prove that $f_{n,m,k}(x)$ cannot have a triple root.  \hfill $\square$ \\ \\
We searched for reducible quadrinomials $f_{n,m,k}(a,x)$ having every irreducible factor of degree $\geq 3$.
The search was over the range $n \leq 40$, and naive height of $a$ up to $1000$.
The following parameterized factorization came to light.

\begin{lem}\label{redquad}
Let $n>m \geq 1$. The quadrinomial $x^{3n}+x^{3m}+x^{n+m}-\frac{1}{27}$ splits into two
irreducible factors in $\Q[x]$, namely
\[ x^{3n}+x^{3m}+x^{n+m}-\frac{1}{27} = (x^n+x^m-\frac{1}{3})(x^{2n}-x^{n+m}+x^{2m}+\frac{x^n}{3}+\frac{x^m}{3}+\frac{1}{9}). \]
\end{lem}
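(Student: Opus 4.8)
The plan is to prove the identity directly, since the proposed factorization is given explicitly. The claim has two parts: first that the displayed product identity holds, and second that each factor is irreducible over $\Q$. The product identity is a routine algebraic verification, but the structure suggests a cleaner route. Observe that the right-hand side has the shape $(A - \frac{1}{3})(A^2 + \frac{A}{3} + \frac{1}{9}) + (\text{correction})$, where $A = x^n + x^m$. Indeed, if $B = x^n + x^m$, then $B^3 - \frac{1}{27} = (B - \frac{1}{3})(B^2 + \frac{B}{3} + \frac{1}{9})$. Since $B^3 = (x^n+x^m)^3 = x^{3n} + 3x^{2n+m} + 3x^{n+2m} + x^{3m}$, the quantity $B^3 - \frac{1}{27}$ is not quite our quadrinomial. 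The first step, therefore, is to identify the correct second factor: I would set $g(x) = x^{2n} - x^{n+m} + x^{2m} + \frac{x^n}{3} + \frac{x^m}{3} + \frac{1}{9}$ and expand $(x^n + x^m - \frac{1}{3}) \cdot g(x)$ directly, collecting terms by total degree to confirm that all the cross terms (those with exponents $2n+m$, $n+2m$, etc.) cancel, leaving exactly $x^{3n} + x^{3m} + x^{n+m} - \frac{1}{27}$.

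\textbf{Reducing to a substitution.} A more illuminating approach avoids brute expansion. Note that the quadratic factor $g(x)$ can be rewritten in terms of $x^n$ and $x^m$ as $(x^n)^2 + (x^m)^2 - x^n x^m + \frac{1}{3}(x^n + x^m) + \frac{1}{9}$. Introducing formal variables $X = x^n$, $Y = x^m$, the identity to be verified becomes
\begin{equation*}
X^3 + Y^3 + XY - \tfrac{1}{27} = \bigl(X + Y - \tfrac{1}{3}\bigr)\bigl(X^2 - XY + Y^2 + \tfrac{1}{3}X + \tfrac{1}{3}Y + \tfrac{1}{9}\bigr),
\end{equation*}
which is a polynomial identity in two variables $X, Y$. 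This is precisely the classical factorization $X^3 + Y^3 + Z^3 - 3XYZ = (X+Y+Z)(X^2+Y^2+Z^2-XY-YZ-ZX)$ specialized to $Z = -\frac{1}{3}$, since then $Z^3 = -\frac{1}{27}$ and $-3XYZ = XY$. Verifying the two-variable identity is a one-line check, and substituting back $X = x^n$, $Y = x^m$ immediately yields the product formula. This is the clean heart of the argument.

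\textbf{Establishing irreducibility.} The main obstacle is the second assertion, that each factor is irreducible over $\Q$. For the linear-looking factor $x^n + x^m - \frac{1}{3}$, irreducibility is genuinely a claim about a trinomial and does not follow from the product identity. I would clear denominators, writing $3x^n + 3x^m - 1$, and attempt a criterion such as Eisenstein at the prime $3$ after a suitable reciprocal or shift transformation, or invoke known irreducibility results for trinomials (for instance via the Newton polygon at $3$, whose single slope segment has denominator controlling the factorization). For the sextic-type factor one can either argue that a nontrivial factorization of it would, combined with the first factor, contradict irreducibility properties of the original quadrinomial, or apply a direct trinomial-style criterion to $9g(x) = 9x^{2n} - 9x^{n+m} + 9x^{2m} + 3x^n + 3x^m + 1$. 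I expect the irreducibility of the two factors to be the delicate part: the product identity is essentially free, but proving that $3x^n + 3x^m - 1$ and its companion admit no further rational factorization for all $n > m \geq 1$ requires a uniform argument, most plausibly a $3$-adic Newton polygon analysis showing that every root has the same valuation, so that any proper factor would have a constant term whose $3$-adic valuation is incompatible with integrality.
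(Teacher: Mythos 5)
Your verification of the product identity is exactly the paper's: both rest on the classical identity $a^3+b^3+c^3-3abc=(a+b+c)(a^2+b^2+c^2-ab-bc-ca)$ with $(a,b,c)=(x^n,x^m,-\tfrac13)$, and your treatment of the first factor also matches the paper, which shows $3x^nf_1(1/x)=-(x^n-3x^{n-m}-3)$ is Eisenstein at $3$. The gap is in your argument for the second factor. A $3$-adic Newton polygon analysis of $9g(x)=9x^{2n}-9x^{n+m}+9x^{2m}+3x^n+3x^m+1$ (or of $f_2$ itself) yields a single segment from $(0,2)$ to $(2n,0)$ of slope $-1/n$, but this segment passes through the lattice point $(n,1)$: every root has the same valuation $1/n$ (in the appropriate normalization), so the polygon only forces any rational factor to have degree divisible by $n$. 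It cannot exclude a splitting into two factors of degree $n$, whose constant terms would have $3$-adic valuation $-1$ — perfectly integral data. And no valuation-theoretic argument can exclude this, because $f_2$ genuinely \emph{does} split into two degree-$n$ factors $g\bar g$ over $\Q(\omega)$ with $\omega^2+\omega+1=0$; the whole content of the irreducibility claim is that this splitting is not defined over $\Q$. Your fallback suggestion — deriving a contradiction with ``irreducibility properties of the original quadrinomial'' — is circular, since the quadrinomial is reducible by construction and no such property is available.

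What the paper actually does at this point is finer: it writes $f_2=g\bar g$ with $g(x)=x^n+\omega x^m-\tfrac{\omega^2}{3}$ and reduces to showing $g$ is irreducible over $\Q(\omega)$, equivalently that $h(x)=-3\omega x^ng(1/x)=x^n+\omega^2\pi^2x^{n-m}+\omega\pi^2$ is irreducible in $\Z[\omega][x]$, where $\pi^2=-3$ and $\pi$ is prime (so $3$ ramifies and the ``Eisenstein exponent'' $2$ becomes tractable). Reducing mod $\pi$ forces any factorization $h=h_1h_2$ to have all non-leading coefficients of $h_1,h_2$ divisible by $\pi$; comparing coefficients first forces the two factors to have equal degree $n=2j$, and then the coefficient of $x^j$ yields $a_0+b_0\equiv 0\bmod{\pi}$, which with $a_0b_0=\omega$ gives the impossible congruence $a_0^2\equiv-1\bmod{\pi}$ in the residue field $\F_3$. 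Some argument of this arithmetic kind — going beyond valuations into the residue field of $\Z[\omega]$ — is needed to close your proof.
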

\begin{proof}
The factorization follows from the identity $a^3+b^3+c^3-3abc=(a+b+c)(a^2+b^2+c^2-ab-ac-bc)$
with $(a,b,c)=(x^n,x^m,-1/3)$. The first factor $f_1(x)=x^n+x^m-\frac{1}{3}$ is irreducible, because
$3x^{n}f_1(1/x)$ is an Eisenstein polynomial for the prime $3$. Consider the second factor
\[ f_2(x)=x^{2n}-x^{n+m}+x^{2m}+\frac{x^n}{3}+\frac{x^m}{3}+\frac{1}{9}= (x^n+\omega x^m-\frac{\omega^2}{3})(x^n+\omega^2 x^m-\frac{\omega}{3}), \]
where $\omega^2+\omega+1=0$. It suffices to show that
$g(x)=x^n+\omega x^m-\frac{\omega^2}{3}$, $n>m\geq 1$, is irreducible
in $\Q(\omega)[x]$. Equivalently, we prove
$h(x)=-3\omega x^n g(1/x)=x^n-3\omega^2 x^{n-m}-3 \omega$ is irreducible in the ring
$\Z[\omega]$. Here, $h(x)=x^n+\omega^2 \pi^2 x^{n-m}+\omega \pi^2$, where $\pi^2=-3$,
and $\pi$ is a prime of $\Z[\omega]$. \\

Suppose that $h(x)=h_1(x)h_2(x)$, $h_i(x) \in \Z[\omega][x]$. Since $h(x) \equiv x^n \bmod{(\pi)}$,
unique factorization in the quotient ring implies
\begin{align*}
h_1(x)= & x^j+\pi (a_{j-1}x^{j-1}+a_{j-2}x^{j-2}+...+a_1 x+a_0), \\
h_2(x)= & x^{n-j}+\pi (b_{n-j-1}x^{n-j-1}+b_{n-j-2}x^{n-j-2}+...+b_1 x+b_0),
\end{align*}
where without loss of generality we may suppose $j \geq n-j$. If this inequality is strict, then
the coefficient of $x^{n-j}$ in the product $h_1(x) h_2(x)$ is equal to
\[ \pi a_0 +\pi^2 a_1 b_{n-j-1} +\pi^2 a_2 b_{n-j-2}+...+\pi^2 a_{n-j} b_0. \]
However, on comparing with coefficients of $h(x)$, this coefficient is a multiple of $\pi^2$,
so that $a_0 \equiv 0 \bmod{\pi}$,
which is impossible since $a_0 b_0=\omega$. We deduce that $j=n-j$, i.e. $n=2j$.  The coefficient of $x^j$ in the product $h_1 h_2$ is now given by
\[ \pi a_0+\pi^2 a_1 b_{j-1}+\pi^2 a_2 b_{j-2}+...+\pi^2 a_{j-1} b_1+\pi b_0. \]
As before, this coefficient is a multiple of $\pi^2$, forcing $a_0+b_0 \equiv 0 \bmod{\pi}$.
Using $a_0 b_0=\omega$, we deduce $a_0^2 \equiv -\omega \equiv -1 \bmod{\pi}$,
an impossible congruence. \\

Thus $h(x)$ is irreducible in $\Z[\omega][x]$, and $f_2(x)$ is irreducible in $\Q[x]$.
\end{proof}
This result allows construction as follows of an irreducible quadrinomial $f_{n,m,k}(a,x)$
such that $f_{n,m,k}(a,x^3)$ is reducible, and each irreducible factor has degree $\geq 3$.
\begin{cor}
Let $f(x)=x^N+x^M+x^{\frac{N+M}{3}}-\frac{1}{27}$ with $NM\not\equiv 0\pmod{3}$ and
$N+M\equiv 0\pmod{3}$. Then the polynomial $f(x)$ is irreducible over $\Q$, and $f(x^3)$
is reducible over $\Q$ with each irreducible factor of degree $\geq N$.
\end{cor}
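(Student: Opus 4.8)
The plan is to reduce everything to the factorization already proved in Lemma~\ref{redquad}. Assume without loss of generality that $N>M\geq 1$, and write $k=(N+M)/3$, which is a positive integer precisely because $N+M\equiv 0\pmod 3$. The first step is to substitute $x\mapsto x^{3}$ and observe that
\[
f(x^{3})=x^{3N}+x^{3M}+x^{N+M}-\frac{1}{27}
\]
is exactly the quadrinomial on the left-hand side of Lemma~\ref{redquad} with $(n,m)=(N,M)$. That Lemma therefore gives $f(x^{3})=A(x)B(x)$, where $A(x)=x^{N}+x^{M}-\frac{1}{3}$ has degree $N$ and $B(x)$ (the second factor there) has degree $2N$, and where \emph{both $A$ and $B$ are irreducible over $\Q$}. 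Since both $N$ and $2N$ are $\geq N$, this immediately yields the second assertion: $f(x^{3})$ is reducible, and each of its irreducible factors has degree $\geq N$.

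For the irreducibility of $f$ itself I would argue by contradiction. Suppose $f=gh$ with $g,h\in\Q[x]$ of positive degree. Applying $x\mapsto x^{3}$ gives $f(x^{3})=g(x^{3})h(x^{3})$, a factorization into two nonconstant polynomials. The decisive structural observation is that $g(x^{3})$ and $h(x^{3})$ are \emph{polynomials in $x^{3}$}, so that their degrees $3\deg g$ and $3\deg h$ are divisible by $3$. Because $\Q[x]$ is a unique factorization domain and $A,B$ are irreducible and non-associate (their degrees $N$ and $2N$ differ), every factor of $f(x^{3})=AB$ is, up to a nonzero scalar, one of $1,\,A,\,B,\,AB$; in particular $g(x^{3})$ is a scalar multiple of $A$, $B$, or $AB$.

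The only place the arithmetic hypotheses enter is in eliminating the cases $g(x^{3})=cA$ and $g(x^{3})=cB$. Here I invoke $NM\not\equiv 0\pmod 3$, which forces $3\nmid N$ (hence also $3\nmid 2N$). Then $\deg A=N$ and $\deg B=2N$ are both coprime to $3$, whereas $\deg g(x^{3})=3\deg g$ is a multiple of $3$, so neither equality can hold. The surviving possibility $g(x^{3})=c\,AB$ makes $h$ constant, contradicting $\deg h\geq 1$. This contradiction establishes that $f$ is irreducible. (Equivalently, one argues structurally: $A$ is not a polynomial in $x^{3}$ because $3\nmid N$, and $B$ fails likewise on account of its term $x^{N}/3$.)

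I do not expect a genuine obstacle: the corollary is in essence a transfer of the known irreducibility of $A$ and $B$ back through the substitution $x\mapsto x^{3}$. The step deserving the most care is the unique-factorization matching---confirming that a factor of $f(x^{3})$ whose degree is divisible by $3$ cannot coincide with $A$ or $B$---and this rests on the single numerical input $3\nmid N$ supplied by the hypothesis $NM\not\equiv 0\pmod 3$.
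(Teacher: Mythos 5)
Your proof is correct and takes essentially the same route as the paper: reduce the second assertion directly to Lemma~\ref{redquad}, then for irreducibility note that a nontrivial factorization $f=gh$ forces $g(x^3)$, by unique factorization, to be an associate of one of the two irreducible factors of $f(x^3)$, and derive a contradiction from the hypothesis $NM\not\equiv 0\pmod 3$. The only (cosmetic) difference is in that last contradiction: you compare degrees, using that $3\mid\deg g(x^3)$ while $3\nmid N$ and $3\nmid 2N$, whereas the paper observes that $g(x^3)$ is invariant under $x\mapsto\omega x$ while neither factor is; both arguments are valid.
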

\begin{proof}
Note that $f(x^3)=f_{3N,3M,N+M}(-1/27,x)$, so that the second assertion follows from Lemma \ref{redquad}.
Suppose that $f(x)$ is reducible over $\Q$, i.e. $f(x)=h_{1}(x)h_{2}(x)$ with
$h_{1}, h_{2}\in\Q[x]$ with $\op{deg}h_{i}>1$. Then $f(x^3)=h_{1}(x^3)h_{2}(x^3)=f_{1}(x)f_{2}(x)$,
where $f_{1},f_{2}$ are the irreducible factors of $f(x^3)$ given in the statement of
Lemma \ref{redquad}. Thus $h_{1}(x^3)=bf_{i}(x)$ for some $i\in\{1,2\}$ and $b\in\Q\setminus\{0\}$.
However, from the assumptions on $N,M$, the left hand side is invariant under the mapping
$x\mapsto \omega x$, but the right hand side is not. Thus $f(x)$ is irreducible.
\end{proof}

We list in the table below other examples discovered, and include for completeness
the previously known six examples.

\begin{equation*}
\begin{array}{c|c|c|c|c|l}
  n & m & k & a & & \mbox{factor of } f_{n,m,k}(a,x) \mbox{ of minimum degree} \\
  \hline
  6  & 4  & 2    & -16 & \dagger    & x^3 \pm 3x^2+5x \pm 4  \\
  6  & 4  & 2    & -\frac{441}{256} & & x^3 - 2x^2 + \frac{5}{2}x - \frac{21}{16}\\
  6  & 5  & 3    & -\frac{1}{9} &   & x^3 - x^2 + x - \frac{1}{3}\\
  6  & 5  & 3    & \frac{1}{16} &  & x^3 + \frac{1}{2}x + \frac{1}{4}\\
  7  & 3  & 2    & 4            &  & x^3-x^2-x+2 \\
  7  & 3  & 2    & -98 & \dagger    & x^3-x^2+2x-7  \\
  7  & 5  & 3    & \pm 8 & \ddagger     & x^3 \mp x^2-x \pm 2 \\
  8  & 3  & 2    & -\frac{225}{16} & & x^4 - 2x^3 + 4x -\frac{15}{4} \\
  8  & 4  & 2    & \frac{1}{4}    & & x^4 - 2x^3 + 2x^2 - x + \frac{1}{2}\\
  8  & 4  & 2    & \frac{625}{4}  & &  x^4 - 2x^3 + 2x^2 - 7x + \frac{25}{2} \\
  8  & 6  & 3    & -\frac{1}{4}   & & x^4 - x^3 + x - \frac{1}{2}\\
  8  & 6  & 4    & 4 & \dagger    & x^4 \pm x^3 + x^2 \pm 2x + 2\\
  9  & 4  & 1    & -\frac{13}{27} & & x^3-x^2+x-\frac{1}{3}\\
  9  & 7  & 6    & \frac{4}{729}  & & x^3 - \frac{1}{3}x + \frac{1}{9}\\
  9  & 8  & 6    & 4              & & x^3+2x^2+2x+2 \\
  9  & 8  & 6    & -\frac{1}{8}   & & x^3 - x^2 + x - \frac{1}{2}\\
  9  & 8  & 6    & \frac{729}{8}  & & x^3 - x^2 - 3x + \frac{9}{2} \\
  10 & 4  & 2    & -\frac{1}{4}   & & x^5-2x^4+2x^3-x^2+\frac{1}{2} \\
  10 & 4  & 2    & -\frac{9}{16}  & & x^5-2x^4+2x^3-2x^2+2x-\frac{3}{4}\\
  10 & 4  & 2    & -\frac{441}{4} & & x^5-2x^4+2x^3+3x^2-8x-\frac{21}{2} \\
  10 & 8  & 6    & -\frac{1}{4}   & & x^5-x^4+x^3-x^2+x-\frac{1}{2} \\
  10 & 8  & 6    & -\frac{441}{4} & & x^5-5x^4+13x^3-21x^2+21x-\frac{21}{2}\\
  12 & 5  & 1    &  \frac{7}{16}  & & x^3+x+\frac{1}{2}\\
  12 & 8  & 4    & -16 & \dagger    & x^3 \pm x^2-x \mp 2 \\
  12 & 9  & 3    & -\frac{3}{16}  & & x^3 + \frac{3}{2}, x^3+x^2-\frac{1}{2}\\
  12 & 9  & 5    & \frac{1}{16}   & & x^3 - x^2 + \frac{1}{2}\\
  12 & 9  & 7    & \frac{1}{16}   & & x^3 - x + \frac{1}{2},\quad x^3+x^2+x+\frac{1}{2}\\
  12 & 10 & 3    & \frac{1}{4}    & & x^6 - x^5 + x^3 - x^2 + \frac{1}{2}\\
  12 & 10 & 8    & \frac{1}{4}    & & x^6 - x^5 + x^4 - x^3 + x^2 - x + \frac{1}{2}\\
  12 & 11 & 6    & \frac{1}{8}    & & x^4 - x^3 + x^2 - x + \frac{1}{2}\\
  13 & 11 & 4    & 4              & & x^6-x^3-x^2+2 \\
  13 & 12 & 9    & -27            & & x^4+2x^3+3x^2+3x+3\\
  14 & 8  & 2    & -4             & & x^7 - 2x^6 + 2x^5 - x^4 + 2x^2 - 3x + 2\\
  16 & 11 & 7    & \frac{1}{16}   & & x^4 - x^3 + \frac{1}{2}\\
  17 & 10 & 8    & 4              & & x^8+x^7+x^4+2x + 2\\
  17 & 14 & 8    & -16 & \dagger    & x^5+x^3-x^2-2 \\
  20 & 14 & 8    & \frac{1}{4}    & & x^{10}-2x^9+2x^8-x^7+x^5-x^4+x^2-x+\frac{1}{2} \\
  22 & 16 & 10   & -\frac{1}{4}   & & x^{11}-2x^{10}+2x^9-x^8+x^5-x^4+x^2-x+\frac{1}{2}\\
  28 & 18 & 16   & \frac{1}{64}   & & x^{14}-4x^{13}+8x^{12}-10x^{11}+8x^{10}-3x^9-2x^8+\frac{9}{2}x^7 \\
     &    &      &                & & -4x^6+2x^5-x^3+x^2-\frac{1}{2}x+\frac{1}{8} \\ \hline
\end{array}
\end{equation*}
\[    \dagger: \mbox{Jankauskas}, \qquad \ddagger: \mbox{Walsh}   \]
%
%
%

\bigskip





We finish with a conjecture related to Walsh's second question for quadrinomials defined
over finite fields.  We expect in this case that the following strong result is true.

\begin{conj}
Let $\mathbb{F}_{q}$ be a finite field with $q=p^{e}$ elements and
fix a positive integer $M$. Then there is a constant $C$ such
that for each integer $N\geq C$ there exists $a\in\mathbb{F}_{q}$
and triple $n,m,k\in\N_{+} (n>m>k)$ with $n>N$ such that each
irreducible factor of the quadrinomial $f(a,x)$ is of degree $\geq M$.
\end{conj}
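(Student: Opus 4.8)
The plan is to reduce the divisibility condition to a statement about roots in small extension fields, and then to exploit the periodicity of $\alpha^n$ in $n$. Recall that $x^{q^d}-x$ is the product of all monic irreducibles over $\mathbb{F}_q$ of degree dividing $d$; hence $f(a,x)=x^n+x^m+x^k+a$ has all of its irreducible factors of degree $\geq M$ if and only if it has no root in the finite set $S=\bigcup_{d=1}^{M-1}\mathbb{F}_{q^d}$ of elements of degree $<M$ over $\mathbb{F}_q$. Put $D=\op{lcm}_{1\leq d\leq M-1}(q^d-1)$. Every nonzero $\alpha\in S$ satisfies $\alpha^D=1$, so the value $f(a,\alpha)$ for $\alpha\in S$ depends only on the residues of $(n,m,k)$ modulo $D$ (the value at $\alpha=0$ being simply $a$). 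Consequently, once a single admissible configuration $(a;n_0,m_0,k_0)$ is found, replacing $n_0$ by $n_0+\lambda D$ for $\lambda\in\mathbb{N}$ leaves the behaviour on $S$ unchanged while driving $n\to\infty$; this already supplies, for every $N$, a triple with $n>N$, so the constant $C$ may be taken arbitrarily. The problem therefore collapses to producing one good configuration.

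For $q\geq 3$ such a configuration is immediate. Take $k=D$, $m=2D$, $n=jD$ with $j\geq 3$, so that $n>m>k\geq 1$. Then $f(a,\alpha)=1+1+1+a=3+a$ for every nonzero $\alpha\in S$, while $f(a,0)=a$. Choosing $a\in\mathbb{F}_q\setminus\{0,-3\}$, which is nonempty precisely when $q\geq 3$, we see that $f(a,x)$ has no root in $S$, hence no irreducible factor of degree $<M$; letting $j\to\infty$ gives $n>N$. This proves the statement exactly as written whenever $q\geq 3$. The field $\mathbb{F}_2$ is genuinely exceptional: there $f(a,0)=a$ and $f(a,1)=1+a$, so one of $a=0$ or $a=1$ always produces a root in $\mathbb{F}_2$, and every quadrinomial of this shape has a linear factor; thus $q\geq 3$ is necessary, and the conjecture as literally stated should carry this hypothesis.

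The substance of the conjecture, in the spirit of Walsh's question, lies in also demanding that $f(a,x)$ be \emph{primitive}. The quadrinomials above equal $g(x^D)$ with $g(y)=y^j+y^2+y+a$, hence are imprimitive as soon as $D\geq 2$, and primitivity forces $\gcd(n,m,k)=1$, which forbids taking all three exponents divisible by $D$. For the primitive version I would search over residue triples $(n,m,k)\bmod D$ that are not all zero, aiming to show that for at least one such triple the value set $\{\alpha^n+\alpha^m+\alpha^k:\alpha\in S\}\cap\mathbb{F}_q$ omits some element of $\mathbb{F}_q$; any such residue class can then be realized by coprime exponents with $n$ as large as desired, using the Chinese Remainder Theorem together with the periodicity above. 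The hard part will be this distribution statement: one must estimate, uniformly over the subfields $\mathbb{F}_{q^d}$ with $d<M$, how often $\alpha^n+\alpha^m+\alpha^k$ lands in a prescribed $\mathbb{F}_q$-fiber, which calls for Weil-type character-sum bounds rather than the single substitution that sufficed above. Guaranteeing an omitted value while simultaneously respecting coprimality is precisely where the elementary argument breaks down, and I expect it to be the main obstacle.
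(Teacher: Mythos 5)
The paper contains no proof of this statement: it is the closing conjecture of Section~\ref{section7} and is left entirely open, so there is no argument of the authors to compare yours against. Judged on its own terms, your argument is correct and in fact shows that the conjecture as literally written cannot be what the authors intended. The reduction is sound: $f(a,x)$ has an irreducible factor of degree $<M$ over $\mathbb{F}_q$ if and only if it has a root in $S=\bigcup_{d=1}^{M-1}\mathbb{F}_{q^d}$; every nonzero $\alpha\in S$ satisfies $\alpha^{D}=1$ with $D=\op{lcm}_{1\le d\le M-1}(q^d-1)$; and the choice $(n,m,k)=(jD,2D,D)$ with $a\notin\{0,-3\}$ gives $f(a,\alpha)=3+a\neq 0$ on $S\setminus\{0\}$ and $f(a,0)=a\neq 0$, so no factor of degree $<M$ occurs, while $j\to\infty$ makes $n$ arbitrarily large. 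This settles the literal statement affirmatively for all $q\ge 3$ (with $C=1$). Your $q=2$ observation is also correct and is a genuine counterexample for $M\ge 2$: over $\mathbb{F}_2$ one of $x$ or $x+1$ divides every $x^n+x^m+x^k+a$, so no admissible quadruple exists at all. The conjecture therefore needs either the hypothesis $q\ge 3$ or a reformulation.

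The reason this does not dispose of the problem the authors had in mind is the one you identify yourself: the conjecture is offered ``related to Walsh's second question,'' which concerns \emph{primitive} quadrinomials, and your witnesses are maximally imprimitive, being $g(x^{D})$ for a quadrinomial $g$. The substantive open question is the primitive (equivalently, $\gcd(n,m,k)=1$) version. Your closing paragraph correctly reduces that version to a distribution statement about the values $\alpha^{n}+\alpha^{m}+\alpha^{k}$ on $S$ as $(n,m,k)$ ranges over residue triples modulo $D$ that are not all zero, realizable with coprime exponents by the Chinese Remainder Theorem, and correctly locates the difficulty in proving an omitted value uniformly over the subfields $\mathbb{F}_{q^d}$, $d<M$. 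That step is not carried out, so for the intended statement your proposal is a clean reduction plus an acknowledged gap, not a proof; for the literal statement it is complete.
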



\noindent {\bf Acknowledgement}: All computations in this paper were
carried out using Magma~\cite{Mag}.

\vspace*{0.25in}

\noindent School of Mathematics and Statistical Sciences, Arizona
State University, Tempe AZ 85287-1804, USA. e-mail: bremner@asu.edu

\bigskip

\noindent Jagiellonian University, Faculty of Mathematics and Computer Science, Institute of Mathematics,
{\L}ojasiewicza 6, 30-348 Krak\'ow, Poland. email:
Maciej.Ulas@im.uj.edu.pl
\end{document}